\documentclass[amssymb,12pt]{amsart}
\usepackage{amsmath}

\newtheorem{propo}{Proposition}[section]
\newtheorem{proposition}[propo]{Proposition}
\newtheorem{defi}[propo]{Definition}
\newtheorem{conje}[propo]{Conjecture}
\newtheorem{lemma}[propo]{Lemma}

\newtheorem{corollary}[propo]{Corollary}

\newtheorem{theorem}[propo]{Theorem}

\newcommand{\ZZ}{{\mathbb Z}}
\newcommand{\FF}{{\mathbb F}}

\newcommand{\QQ}{{\mathbb Q}}

\newcommand{\NN}{{\mathbb N}}

\begin{document}
\title{On $\textbf{On}_p$}
\author{Joseph DiMuro}
\address{Department of Mathematics and Computer Science\\
Biola University\\
La Mirada, CA, USA}
\email{joseph.dimuro@biola.edu}

\date{12/2/14}
\subjclass[2010]{12F05, 12F20}
\keywords{ordinals, fields, field extensions}

\begin{abstract}
Generalizing John Conway's construction of the Field $\textbf{On}_2$, we give the ``minimal'' definitions of addition and multiplication that turn the ordinals into a Field of characteristic $p$, for any prime $p$. We then analyze the structure of the resulting Field, which we will call $\textbf{On}_p$.
\end{abstract}

\maketitle

In Chapter 6 of \cite{Conway}, John Conway introduces the Field $\textbf{On}_2$: the Class of all ordinals with the appropriate addition and multiplication defined to obtain an algebraically closed Field of characteristic $2$. (Following Conway's convention, we will use capitalized terms like ``Group'', ``Ring'', and ``Field'' when the structure referred to is a proper Class.) The operations are referred to as
Nim-addition and Nim-multiplication, due to their connections with the game of Nim. Further descriptions of the structure of this Field are given by Lenstra in \cite{Lenstra1} and \cite{Lenstra2}.

On pg. 17 of \cite{Lenstra1}, Lenstra gives an addition operation which turns the Class of ordinals into an abelian group of exponent $3$. Lenstra then asks if there is an analogous definition of multiplication that produces a Field of characteristic $3$, and if other characteristics can similarly be handled. In \cite{Laubie}, Fran\c{c}ois Laubie gives an appropriate definition of addition for any prime characteristic $p$. (The definition is confined to the finite ordinals, but it works just as well for all ordinals.) In this paper, we will provide definitions of addition and multiplication that turn the ordinals into a Field of any prime characteristic $p$, which we will call $\textbf{On}_p$. We will also analyze the structure of these Fields $\textbf{On}_p$, obtaining results analogous to those in all the aforementioned references.

In what follows, we will normally use $+$ and $\times$ to denote addition and multiplication in $\textbf{On}_p$. If the standard ordinal operations are needed instead, then the given expression will be enclosed in brackets. For example, $[4(4)+3]=19$, but in $\textbf{On}_2$, $4(4)+3=6+3=5$. We will sometimes use exponentiation in a similar way: $[4^2+3]=19$, but in $\textbf{On}_2$, $4^2+3=4(4)+3=5$. Also, the notation $a\cdot b$ (when not in brackets) will only be used for repeated addition; in $\textbf{On}_2$, $2(4)=8$, but $2\cdot 4=4+4=0$.

\section{Addition and Multiplication in $\textbf{On}_p$}

In \cite{Conway}, the definitions of addition and multiplication in $\textbf{On}_2$ are given ``genetically'', as follows: for $\alpha,\beta\in \textbf{On}_2$, we have $$\alpha+\beta=\text{mex}\{\alpha'+\beta,\alpha+\beta'\}$$ and $$\alpha\beta=\text{mex}\{\alpha'\beta+\alpha\beta'-\alpha'\beta'\}.$$ Here, $\alpha'$ ranges over all ordinals less than $\alpha$, $\beta'$ ranges over all ordinals less than $\beta$, and ``mex'' represents the ``minimal excludent'' of the given set (i.e. the smallest ordinal not in that set). Thus, all sums and products are defined in terms of lexicographically earlier sums and
products.

In \cite{Laubie}, a similar genetic definition is given for addition in $\textbf{On}_p$. Unfortunately, finding a genetic definition for multiplication in $\textbf{On}_p$ is apparently much more difficult (reasons for this will be given later). Rather than working with genetic definitions, we will establish the structure of $\textbf{On}_p$ inductively, defining addition and multiplication on progressively larger fields.

Following the convention in \cite{Conway}, ordinals in $\textbf{On}_p$ will sometimes be treated as single elements of $\textbf{On}_p$, and will sometimes be treated as the set of all lesser ordinals. Thus, a single ordinal $\alpha\in \textbf{On}_p$ will be called a ``group'', or ``ring'', or ``field'',
whenever the set of ordinals $\beta<\alpha$ forms a group, or ring, or field.

For any given ordinal $\Delta$, the ``$\Delta$-th'' field in $\textbf{On}_p$ will be denoted by $\phi_\Delta$. In constructing these fields we must ensure that:

\begin{enumerate}
  \item
  For every ordinal $\Delta$, the ordinal $\phi_\Delta$ is a field of characteristic $p$.
  \item
  If $\Delta '<\Delta$, then $\phi_{\Delta '}<\phi_\Delta$.
  \item
  The operations on each field extend those of all previous fields. That is, if $\Delta '<\Delta$, and $\alpha,\beta\in\phi_{\Delta '}$, then $\alpha+\beta$ and $\alpha\beta$ are the same in both $\phi_{\Delta '}$ and $\phi_{\Delta}$.
\end{enumerate}

We will construct the fields by induction on $\Delta$, as follows:

\begin{itemize}
  \item
  If $\Delta=0$, then $\phi_\Delta=\phi_0=p$, and the operations on $p$ are just ordinary addition and multiplication modulo $p$. Thus, $\phi_0$ is isomorphic to $\FF_p$, the finite field of $p$ elements.
  \item
  If $\Delta$ is a successor ordinal, then we will construct $\phi_\Delta$ from $\phi_{[\Delta-1]}$, using the methods discussed below.
  \item
  If $\Delta$ is a limit ordinal, then let $\Delta[i]$ be a fundamental sequence of $\Delta$. Then $\phi_\Delta$ will be the limit ordinal whose fundamental sequence is the following: $\phi_\Delta[i]=\phi_{\Delta[i]}$. (In other words, $\phi_\Delta$ is the supremum of all previous fields.)
\end{itemize}

When $\Delta$ is a limit ordinal, the operations on $\phi_\Delta$ are already determined by induction: if $\alpha,\beta\in\phi_\Delta$, then we have $\alpha,\beta\in\phi_{\Delta[i]}$ for some ordinal $\Delta[i]$ in the fundamental sequence of $\Delta$. Then $\alpha+\beta$ and $\alpha\beta$ are defined to be the same in $\phi_\Delta$ as in $\phi_{\Delta[i]}$. Trivially, since every $\phi_{\Delta[i]}$ is a field of characteristic $p$, so is $\phi_\Delta$.

So the only remaining work is to define $\phi_\Delta$ when $\Delta$ is a successor ordinal. For simplicity in what follows, we will let $\widetilde{\phi}=\phi_\Delta$, and we will let $\phi$ be the previous field: $\phi=\phi_{[\Delta-1]}$.

\subsection{When $\phi$ is not algebraically closed}

Assume first that the field $\phi$ is not algebraically closed. Let $\phi[x]$ be the ring of polynomials with coefficients in $\phi$. Let $n$ be the smallest positive integer where not all polynomials in $\phi[x]$ of degree $n$ have roots in $\phi$. Let $h(x)\in\phi[x]$ be the ``lexicographically earliest'' polynomial such that $g(x)=x^n-h(x)$ has no root in $\phi$. In determining which polynomial is lexicographically earliest, we consider the coefficients of the largest power of $x$ first. (For example, $5x^3+2x^2+9x+17$ is
lexicographically earlier than $5x^3+3x^2+1$.) Note that $g(x)$ is then irreducible over $\phi$; if $g(x)$ were the product of two polynomials in $\phi[x]$ of degree less than $n$, then each of those polynomials would have a root in $\phi$, contradicting the fact that $g(x)$ has no root in $\phi$.

We then define the next field to be $\widetilde{\phi}=[\phi^n]$. The definitions of addition and multiplication on $\widetilde{\phi}$ will be chosen so that $\widetilde{\phi}$ is the extension of the field $\phi$ by a root of $g(x)$; the ordinal $\phi$ itself will serve as a root of $g(x)$.

Let $F$ be the factor ring $\phi[x]/\langle g(x)\rangle$. Because $g(x)$ is irreducible over $\phi$, $F$ is a field. Every element of $F$ is of the form $f(x)+\langle g(x)\rangle$, where $f(x)$ is a polynomial in $\phi[x]$ of degree less than $n$. That is, every element of $F$ has the form $\displaystyle\left(\sum_{i=0}^{n-1}x^i\alpha_i\right)+\langle g(x)\rangle$ for some ordinals $\alpha_i\in\phi$. Also, every element of $\widetilde{\phi}$ has a similar representation: if $\alpha\in\widetilde{\phi}$, then $\displaystyle\alpha=\left[\sum_{i=0}^{n-1}\phi^i\alpha_i\right]$ for some ordinals $\alpha_i\in\phi$. We thus have a one-to-one, onto map between $\widetilde{\phi}$ and $F$: we can define $\theta:\widetilde{\phi}\rightarrow F$ via $$\displaystyle\theta\left(\left[\sum_{i=0}^{n-1}\phi^i\alpha_i\right]\right)=\sum_{i=0}^{n-1}\left(x^i\alpha_i\right)+\langle g(x)\rangle.$$

We will use this map to directly define addition and multiplication in $\widetilde{\phi}$: if $\alpha,\beta\in\widetilde{\phi}$, then we let $\alpha+\beta=\theta^{-1}(\theta(\alpha)+\theta(\beta))$, and $\alpha\beta=\theta^{-1}(\theta(\alpha)\theta(\beta))$. We then have $\theta(\alpha+\beta)=\theta(\alpha)+\theta(\beta)$ and $\theta(\alpha\beta)=\theta(\alpha)\theta(\beta)$, so $\theta$ is an isomorphism.
Since $F$ is a field of characteristic $p$, so is $\widetilde{\phi}$.

Also, note that the given operations on $\widetilde{\phi}$ extend those on $\phi$. Given $\alpha,\beta\in\phi$, let $\gamma_1=\alpha+\beta$, $\gamma_2=\alpha\beta$ in $\phi$. Then in $\widetilde{\phi}$, we have $\alpha+\beta=\theta^{-1}(\theta(\alpha)+\theta(\beta))=\theta^{-1}((\alpha+\langle g(x)\rangle)+(\beta+\langle g(x)\rangle))=\theta^{-1}(\gamma_1+\langle g(x)\rangle)=\gamma_1$, and similarly $\alpha\beta=\gamma_2$. So as required, $\widetilde{\phi}$ is a field of characteristic $p$ extending the operations of $\phi$.

Finally, one more thing to note:

\begin{lemma}\label{powersofphi1}
Given a field $\phi\in On_p$, assume that the next field $\widetilde{\phi}$ is a degree $n$ extension of $\phi$. If $\displaystyle\alpha=\left[\sum_{i=0}^{n-1}\phi^i\alpha_i\right]\in\widetilde{\phi}$, then $\displaystyle\alpha=\sum_{i=0}^{n-1}\phi^i\alpha_i$.
\end{lemma}

\begin{proof}
We have $\displaystyle\theta\left(\left[\sum_{i=0}^{n-1}\phi^i\alpha_i\right]\right)=\sum_{i=0}^{n-1}\left(x^i\alpha_i\right)+\langle g(x)\rangle\newline =\sum_{i=0}^{n-1}\left(x+\langle g(x)\rangle\right)^i\alpha_i=\sum_{i=0}^{n-1}(\theta(\phi))^i\alpha_i=\theta\left(\sum_{i=0}^{n-1}\phi^i\alpha_i\right)$. Since $\theta$ is one-to-one, we must have $\displaystyle\left[\sum_{i=0}^{n-1}\phi^i\alpha_i\right]=\sum_{i=0}^{n-1}\phi^i\alpha_i$.
\end{proof}

\subsection{When $\phi$ is algebraically closed}

Now assume that $\phi$ is an algebraically closed field. The next field will be $\widetilde{\phi}=[\phi^\phi]$, and we will define addition and multiplication so that $\widetilde{\phi}$ is isomorphic to $\phi(x)$, the field of rational functions with coefficients in $\phi$.

Note: the smallest field in $\textbf{On}_p$ is $\phi_0=p$, and all fields $\phi_n$ for finite $n$ will be algebraic extensions of $\phi_0$, since no finite fields are algebraically closed. The next field must be $\phi_\omega=\omega$. Thereafter, all fields $\phi_\Delta$ will either be a [power] of the preceding field (when $\Delta$ is a successor ordinal) or the supremum of all previous fields (when $\Delta$ is
a limit ordinal). Thus, all infinite fields in $\textbf{On}_p$ will be [powers] of $\omega$, and hence, limit ordinals. (In what follows, we will need the fact that all algebraically closed fields are limit ordinals.)

Following the notation on pg. 62 in \cite{Conway}, any rational function $f(x)\in\phi(x)$ has a partial fraction expansion $$f(x)=\sum_i \frac{\beta_i}{(x-\alpha_i)^{n_i+1}}+\sum_jx^{m_j}\delta_j$$ where
$\alpha_i,\beta_i,\delta_j\in\phi$ and $n_i,m_j\in\omega$ for every $i$ and $j$. Also, every element in $\widetilde{\phi}=[\phi^{\phi}]$ has the form $$\left[\sum_i\phi^{\omega+\omega\alpha_i+n_i}\beta_i+\sum_j\phi^{m_j}\delta_j\right]$$ where $\alpha_i,\beta_i,\delta_j\in\phi$ and $n_i,m_j\in\omega$ for each $i$ and $j$. We thus have the following one-to-one, onto map: $\theta:\widetilde{\phi}\rightarrow\phi(x)$, where $$\theta\left(\left[\sum_i\phi^{\omega+\omega\alpha_i+n_i}\beta_i+\sum_j\phi^{m_j}\delta_j\right]\right)=\sum_i \frac{\beta_i}{(x-\alpha_i)^{n_i+1}}+\sum_jx^{m_j}\delta_j$$.

We define the operations on $\widetilde{\phi}$ the same way as in the case where $\phi$ is not algebraically closed: if $\alpha,\beta\in\widetilde{\phi}$, then we let $\alpha+\beta=\theta^{-1}(\theta(\alpha)+\theta(\beta))$, and $\alpha\beta=\theta^{-1}(\theta(\alpha)\theta(\beta))$. We then have $\theta(\alpha+\beta)=\theta(\alpha)+\theta(\beta)$ and $\theta(\alpha\beta)=\theta(\alpha)\theta(\beta)$, so $\theta$ is an isomorphism. Since $\phi(x)$ is a field of characteristic $p$, so is $\widetilde{\phi}$. And since $\theta(\alpha)=\alpha$ for every $\alpha\in\phi$, the operations in $\widetilde{\phi}$ extend those in $\phi$. So as required, $\widetilde{\phi}$ is a field of characteristic $p$ extending the operations of $\phi$.

As with the non-algebraically-closed case, we have one more thing to note:

\begin{lemma}\label{powersofphi2}
Given an algebraically closed field $\phi\in On_p$, let $\widetilde{\phi}$ be the next larger field. If $\displaystyle\alpha=\left[\sum\phi^\delta\alpha_\delta\right]\in\widetilde{\phi}$, then $\displaystyle\alpha=\sum\left[\phi^\delta\right]\alpha_\delta$.
\end{lemma}

This is essentially the same result as the previous lemma, except that the exponents on $\phi$ can now range over all ordinals $\delta<\phi$.

\begin{proof}
Given any finite collections of ordinals $\alpha_i,\beta_i,\delta_j\in\phi$ and $n_i,m_j\in\omega$, we have
$$\theta\left(\left[\sum_i\phi^{\omega+\omega\alpha_i+n_i}\beta_i+\sum_j\phi^{m_j}\delta_j\right]\right)=\sum_i \frac{\beta_i}{(x-\alpha_i)^{n_i+1}}+\sum_jx^{m_j}\delta_j$$
$$=\sum_i\theta\left(\left[\phi^{\omega+\omega\alpha_i+n_i}\right]\right)\beta_i+\sum_j\theta(\left[\phi^{m_j}\right])\delta_j=\theta\left(\sum_i\left[\phi^{\omega+\omega\alpha_i+n_i}\right]\beta_i+\sum_j\left[\phi^{m_j}\right]\delta_j\right)$$
Thus, since $\theta$ is one-to-one, we have:
$$\left[\sum_i\phi^{\omega+\omega\alpha_i+n_i}\beta_i+\sum_j\phi^{m_j}\delta_j\right]=\sum_i\left[\phi^{\omega+\omega\alpha_i+n_i}\right]\beta_i+\sum_j\left[\phi^{m_j}\right]\delta_j$$
\end{proof}

We now have defined addition and multiplication operations that turn the ordinals into $\textbf{On}_p$, a Field of characteristic $p$. But we still must show that these Fields, as defined, are the correct analogues of Conway's field $\textbf{On}_2$. We must show that these definitions of addition and multiplication are the ``minimal'' definitions which will turn the ordinals into a Field of characteristic $p$.

\section{The minimality of $\textbf{On}_p$}

As stated in the last section, the definitions of addition and multiplication in $\textbf{On}_2$ are given in \cite{Conway} as follows: $\alpha+\beta=\text{mex}\{\alpha'+\beta,\alpha+\beta'\}$, and $\alpha\beta=\text{mex}\{\alpha'\beta+\alpha\beta'-\alpha'\beta'\}$. These operations can be thought of as the ``minimal'' operations that turn the ordinals into a Field. That is, let's say we tried to write out an addition table for the ordinals, not filling in any entry until all lexicographically earlier entries are filled. (So we don't determine $\alpha+\beta$ until everything of the form $\alpha'+\beta$ or $\alpha+\beta'$ is determined.) For each entry in the table, we always choose the smallest ordinal which allows the resulting structure to be a Field. Then we do the same with multiplication, determining $\alpha\beta$ only after all products of the form $\alpha'\beta$, $\alpha\beta'$, and $\alpha'\beta'$ have been determined. The result would be Conway's definition of $\textbf{On}_2$, for the following reason:

\begin{lemma}\label{mexismin}
If $F$ is a Field consisting of the Class of all ordinals, then regardless of how addition and multiplication are defined, for all ordinals $\alpha,\beta$, we have $\alpha+\beta\ge\text{mex}\{\alpha'+\beta,\alpha+\beta'\}$, and $\alpha\beta\ge\text{mex}\{\alpha'\beta+\alpha\beta'-\alpha'\beta'\}$.
\end{lemma}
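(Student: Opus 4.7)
The plan is to prove both inequalities by contradiction, exploiting the defining property of the mex together with the cancellation laws available in any Field. The central observation is the following trivial but crucial fact about the minimal excludent: if $S$ is any set of ordinals and $\gamma<\mathrm{mex}(S)$, then necessarily $\gamma\in S$, for otherwise $\gamma$ would itself witness a smaller ordinal missing from $S$.

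For the additive statement, I would assume toward contradiction that $\alpha+\beta<\mathrm{mex}\{\alpha'+\beta,\alpha+\beta'\}$. Then by the observation above, $\alpha+\beta$ belongs to the set, so either $\alpha+\beta=\alpha'+\beta$ for some $\alpha'<\alpha$, or $\alpha+\beta=\alpha+\beta'$ for some $\beta'<\beta$. Because $F$ is a Field, the additive cancellation law applies, yielding $\alpha=\alpha'$ in the first case and $\beta=\beta'$ in the second, each a contradiction.

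For the multiplicative statement, the argument is structurally the same but pivots on the factorization identity
\[
\alpha\beta-(\alpha'\beta+\alpha\beta'-\alpha'\beta')=(\alpha-\alpha')(\beta-\beta').
\]
Assuming $\alpha\beta<\mathrm{mex}\{\alpha'\beta+\alpha\beta'-\alpha'\beta'\}$ forces $\alpha\beta=\alpha'\beta+\alpha\beta'-\alpha'\beta'$ for some $\alpha'<\alpha$ and $\beta'<\beta$, whence $(\alpha-\alpha')(\beta-\beta')=0$ in $F$. Since $F$ is a Field and therefore has no zero divisors, one of the factors must vanish, giving $\alpha=\alpha'$ or $\beta=\beta'$, the desired contradiction.

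There is essentially no substantive obstacle here; the proof is a direct two-line consequence of (i) the mex's minimality and (ii) the absence of zero divisors and the existence of additive inverses in a Field. The only thing worth flagging for the reader is that the expression $\alpha'\beta+\alpha\beta'-\alpha'\beta'$ implicitly requires additive inverses, which is precisely what a Field supplies, and that the factorization identity is what forces the "subtracted" cross-term $-\alpha'\beta'$ to appear in Conway's genetic formula in the first place.
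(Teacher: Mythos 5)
Your proof is correct and follows essentially the same argument as the paper: if the sum or product were below the mex it would equal an element of the corresponding set, and additive cancellation (respectively the factorization $(\alpha-\alpha')(\beta-\beta')=0$ and the absence of zero divisors) yields the contradiction. No further comment is needed.
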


So, in $\textbf{On}_2$, each sum or product of ordinals yields the smallest ordinal possible, while still ensuring that $\textbf{On}_2$ is indeed a Field.

\begin{proof}
Assume that, for some ordinals $\alpha$ and $\beta$, we have $\alpha+\beta<\text{mex}\{\alpha'+\beta,\alpha+\beta'\}$. Then either $\alpha+\beta=\alpha'+\beta$ for some $\alpha'<\alpha$, or $\alpha+\beta=\alpha'+\beta$ for some $\beta'<\beta$. But then either $\alpha=\alpha'$ or $\beta=\beta'$, a contradiction.

Now assume that, for some ordinals $\alpha$ and $\beta$, we have $\alpha\beta<\text{mex}\{\alpha'\beta+\alpha\beta'-\alpha'\beta'\}$. Then for some $\alpha'<\alpha$ and $\beta'<\beta$, we have $\alpha\beta=\alpha'\beta+\alpha\beta'-\alpha'\beta'$. But then $(\alpha-\alpha')(\beta-\beta')=0$, a contradiction, since fields have no zero divisors.
\end{proof}

Now, in \cite{Laubie}, Laubie gives a genetic definition for $p$-adic addition in the set of finite ordinals (which can be extended to all ordinals). Under his definition, addition can be done by expressing ordinals in base $p$, then adding in base $p$ without carrying. (For example, in $\textbf{On}_3$, we have $22+19=(9+9+3+1)+(9+9+1)=9+3+1+1=14.$) We will now show that the definitions given in Section 1 produce the
same property of addition; thus, our non-genetic definition of addition is the same as Laubie's genetic definition.

\begin{theorem}
Given ordinals $\alpha,\beta\in \textbf{On}_p$, assume $\alpha=[\sum p^\delta a_\delta]$ and $\beta=[\sum p^\delta b_\delta]$ (where each $\delta$ is an ordinal, and every $a_\delta,b_\delta\in p$). Then $\alpha+\beta=[\sum p^\delta c_\delta]$, where each $c_\delta\equiv a_\delta+b_\delta$ (mod $p$).
\end{theorem}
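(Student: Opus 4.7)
The plan is to proceed by transfinite induction on $\Delta$, establishing the claim for all $\alpha,\beta\in\phi_\Delta$ while simultaneously maintaining the auxiliary invariant that each $\phi_\Delta$ is itself an ordinal power of $p$, namely $\phi_\Delta=[p^{\gamma_\Delta}]$ for some ordinal $\gamma_\Delta$. This invariant is crucial because it is what makes base-$p$ digit positions line up cleanly with the polynomial or rational-function coordinates used in the constructions of Section~1. The base case $\Delta=0$ is just addition in $\FF_p$ on single digits, and the limit case is automatic: any two elements of $\phi_\Delta$ lie in a common $\phi_{\Delta[i]}$ from the fundamental sequence, and the supremum of the powers $[p^{\gamma_i}]$ is again $[p^{\sup\gamma_i}]$.

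For the algebraic successor case I would write $\phi=[p^\gamma]$ by the inductive invariant, so that $\widetilde{\phi}=[\phi^n]=[p^{\gamma n}]$. Every element of $\widetilde{\phi}$ is uniquely $[\sum_{i<n}\phi^i\alpha_i]$ with $\alpha_i\in\phi$, and each $\alpha_i$ has, by the inductive hypothesis, a base-$p$ expansion supported on positions $<\gamma$. Applying left-distributivity of ordinal multiplication to $[\phi^i\alpha_i]$ places those digits into positions of the form $\gamma i+\delta$ with $\delta<\gamma$, and since the ranges $[\gamma i,\gamma(i+1))$ are pairwise disjoint for distinct $i$, the ordinal sum $[\sum_i\phi^i\alpha_i]$ is simply the concatenation of the base-$p$ expansions of the $\alpha_i$. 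Because $\theta$ converts addition in $\widetilde{\phi}$ into coefficient-wise addition in $\phi$, and the inductive hypothesis says coefficient addition in $\phi$ is digit-wise mod $p$, addition in $\widetilde{\phi}$ is digit-wise mod $p$ as well.

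The main obstacle will be the transcendental successor case, which follows the same pattern with more intricate bookkeeping. Here $\widetilde{\phi}=[\phi^\phi]=[p^{\gamma\cdot\phi}]$, and elements correspond via $\theta$ to rational functions in partial-fraction form~(3). The key point to verify is that the exponents $\omega+\omega\alpha_i+n_i$ (from the poles) and $m_j$ (from the polynomial part) are pairwise distinct ordinals; this then implies that the terms $\beta_i\phi^{\omega+\omega\alpha_i+n_i}$ and $\delta_j\phi^{m_j}$ occupy pairwise disjoint blocks of $\gamma$ consecutive base-$p$ positions, yielding a well-formed base-$p$ expansion of each element of $\widetilde{\phi}$. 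Addition of rational functions in partial-fraction form combines coefficients of matching terms and carries unmatched terms through, so by the inductive hypothesis the resulting addition in $\widetilde{\phi}$ is again digit-wise mod $p$, completing the induction.
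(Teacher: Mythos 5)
Your proposal is correct and follows essentially the same route as the paper: transfinite induction on the index of the field containing both ordinals, trivial base and limit cases, and in the successor case the observation that the isomorphism $\theta$ makes addition coefficient-wise in the base-$\phi$ (respectively partial-fraction) representation, which refines to digit-wise addition mod $p$ because $\phi$ is an ordinal power of $p$. Your only deviation is to carry the power-of-$p$ fact as an explicit inductive invariant, which the paper instead draws from its Section 1 remarks; the substance is the same.
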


\begin{proof}
There exists some field $\phi_\Delta$ containing both $\alpha$ and $\beta$. We will proceed by induction on $\Delta$. The statement is obviously true when $\Delta=0$, as then $\alpha,\beta\in p$, and addition in $p$ is just ordinary addition modulo $p$. If $\Delta$ is a limit ordinal, then for some ordinal $\Delta[i]$ in the fundamental sequence of $\Delta$, both $\alpha$ and $\beta$ are contained in $\phi_{\Delta[i]}$. So by induction, the statement is true in that case.

That leaves the case where $\Delta$ is a successor ordinal; let $\widetilde{\phi}=\phi_\Delta$, and let $\phi=\phi_{[\Delta-1]}$ be the previous field. Based on the work in Section 1, we have $\alpha=[\sum \phi^\delta \alpha_\delta]$ and $\beta=[\sum \phi^\delta \beta_\delta]$, where each $\alpha_\delta,\beta_\delta\in\phi$. (If $\widetilde{\phi}$ is a degree $n$ extension of $\phi$, then every $\delta<n$; if $\phi$ is
algebraically closed, then every $\delta\in\phi$.)  For each $\delta$, let $\alpha_\delta+\beta_\delta=\gamma_\delta$; by induction, each of these summations is just componentwise addition modulo $p$. Then, from Lemmas \ref{powersofphi1} and \ref{powersofphi2}, we have $\alpha+\beta=[\sum \phi^\delta \alpha_\delta]+[\sum \phi^\delta \beta_\delta]=\sum [\phi^\delta] \alpha_\delta+\sum [\phi^\delta] \beta_\delta=\sum [\phi^\delta]
\gamma_\delta=[\sum \phi^\delta \gamma_\delta]$. And since $\phi$ is a [power] of $p$, this is just componentwise addition modulo $p$.
\end{proof}

So indeed, our definition of addition is as it should be. And this leads to one further consequence: the elements of $\textbf{On}_p$ that are groups are exactly the ordinals of the form $[p^\alpha]$, for some ordinal $\alpha$.

We must now show why our definition of multiplication is the correct one.

\begin{defi}
Given ordinals $\alpha,\beta\in \textbf{On}_p$, we will say that the unordered pair $\{\alpha,\beta\}$ has the ``MEX property'' if $\alpha\beta=\text{mex}\{\alpha'\beta+\alpha\beta'-\alpha'\beta'\}$. We will call the set $\{\alpha'\beta+\alpha\beta'-\alpha'\beta'\}$ the ``MEX set'' of the unordered pair $\{\alpha,\beta\}$.
\end{defi}

Note: from Lemma \ref{mexismin}, we have $\alpha\beta\ge\text{mex}\{\alpha'\beta+\alpha\beta'-\alpha'\beta'\}$ for all $\alpha,\beta\in \textbf{On}_p$. Thus, we can say that $\{\alpha,\beta\}$ has the MEX property if $\alpha\beta\le\text{mex}\{\alpha'\beta+\alpha\beta'-\alpha'\beta'\}$. In other words, $\{\alpha,\beta\}$ has the MEX property if, for all $\gamma<\alpha\beta$, we have $\gamma=\alpha'\beta+\alpha\beta'-\alpha'\beta'$ for some choice of $\alpha'<\alpha$, $\beta'<\beta$. We will use this property in the lemmas that follow; to show that a pair
$\{\alpha,\beta\}$ has the MEX property, we will show that all ordinals less than $\alpha\beta$ can be written in the form $\alpha'\beta+\alpha\beta'-\alpha'\beta'$.

With this definition, we can say that $\{\alpha,\beta\}$ has the MEX property for all $\alpha,\beta\in \textbf{On}_2$. It turns out that the same does not apply for $\textbf{On}_p$ when $p\ge 3$ (as we will see later). However, we will prove that certain pairs $\{\alpha,\beta\}$ do have the MEX property, and that these particular products determine the products of any two groups in $\textbf{On}_p$. (Thus, they determine the entire multiplication table of $\textbf{On}_p$, since the products of arbitrary elements can be determined from the products of groups via the distributive law.) That should be sufficient evidence that our definition of multiplication is the ``correct'' one.

\begin{lemma}
Let $\phi\in \textbf{On}_p$ be a field, and let $\widetilde{\phi}$ be the next larger field in $\textbf{On}_p$. Let $\alpha,\beta\in \textbf{On}_p$ be ordinals such that $\alpha>0$, $[\phi^\alpha]\in\widetilde{\phi}$ and $\beta\in\phi$. Then the pair $\{[\phi^\alpha],\beta\}$ has the MEX property.
\end{lemma}

\begin{proof}
Based on Lemmas \ref{powersofphi1} and \ref{powersofphi2}, we have $[\phi^\alpha]\beta=[\phi^\alpha\beta]$, regardless of whether $\phi$ is algebraically closed or not. If $\gamma<[\phi^\alpha\beta]$, then $\gamma=[\phi^\alpha\beta_1+\beta_2]=[\phi^\alpha]\beta_1+\beta_2$ for some $\beta_1<\beta$ and some $\beta_2\in[\phi^\alpha]$. To show that $\gamma$ is in the MEX set of $\{[\phi^\alpha],\beta\}$, we must show that $\gamma=\alpha_1\beta+[\phi^\alpha]\beta'-\alpha_1\beta'$ for some ordinals $\alpha_1<\phi^\alpha$, $\beta'<\beta$.

Let $\beta'=\beta_1$. Since $\beta-\beta_1$ is a nonzero element of $\phi$, and since $\phi$ is a field, there exists some $\phi'\in\phi$ where $\phi'(\beta-\beta_1)=1$. Let $\alpha_1=\phi'\beta_2\in[\phi^\alpha]$. We then have
$\alpha_1\beta+[\phi^\alpha]\beta'-\alpha_1\beta'=[\phi^\alpha]\beta'+\alpha_1(\beta-\beta')=[\phi^\alpha]\beta_1+\phi'\beta_2(\beta-\beta_1)=[\phi^\alpha]\beta_1+\beta_2=\gamma$. So we have rewritten $\gamma$ as required; $\{[\phi^\alpha],\beta\}$ has the MEX property.
\end{proof}

\begin{lemma}
Let $\phi\in \textbf{On}_p$ be a field that is not algebraically closed, and assume that $\widetilde{\phi}\in \textbf{On}_p$, the next larger field, is an extension of $\phi$ of degree $n$. Let $i,j$ be nonnegative integers such that $i+j\le n$. Then the pair $\{[\phi^i],[\phi^j]\}$ has the MEX property.
\end{lemma}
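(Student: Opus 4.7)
The plan is to translate the MEX-property condition into a polynomial-factorization problem, exploiting the fact that $\widetilde\phi$ is constructed as $\phi[x]/\langle g(x)\rangle$ with $g(x)=x^n-h(x)$. First I would introduce the substitution $A'=\phi^i-U(\phi)$ and $B'=\phi^j-V(\phi)$, where $U,V\in\phi[X]$ are monic polynomials of degrees $i$ and $j$ respectively (equivalently, the polynomial representations of $A'$ and $B'$ have degrees $<i$ and $<j$). The simple identity $A'B+AB'-A'B'=AB-(A-A')(B-B')$ then reduces the goal to: given $\gamma<[\phi^i][\phi^j]$, find monic $U,V\in\phi[X]$ of degrees $i,j$ such that $U(\phi)V(\phi)=\phi^{i+j}-\gamma$ in $\widetilde\phi$.

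Next I would split into two cases according to whether $i+j<n$ or $i+j=n$. If $i+j<n$, then $[\phi^i][\phi^j]=[\phi^{i+j}]$ as ordinals, and $\gamma$ corresponds to a polynomial $\gamma(X)\in\phi[X]$ of degree less than $i+j$. The required equation collapses (since everything fits below degree $n$ and no reduction modulo $g$ occurs) to $U(X)V(X)=X^{i+j}-\gamma(X)$ as genuine polynomials. Since $\deg(X^{i+j}-\gamma(X))=i+j<n$, minimality of $n$ implies that every polynomial over $\phi$ of degree at most $i+j$ has a root in $\phi$; peeling off roots successively yields a complete factorization $X^{i+j}-\gamma(X)=\prod_{k=1}^{i+j}(X-r_k)$. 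Grouping any $i$ of the factors into $U$ and the rest into $V$ finishes this case.

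The case $i+j=n$ is where the key insight lies. Here $[\phi^i][\phi^j]=[h(\phi)]$ as ordinals (since $\phi^n=h(\phi)$ in $\widetilde\phi$), so $\gamma<[h(\phi)]$. In the Cantor-normal-form base-$\phi$ ordering, $\gamma<[h(\phi)]$ is exactly the statement that the coefficient sequence of $\gamma(X)$ is lexicographically earlier than that of $h(X)$ when the highest-degree coefficient is read first. The target equation now becomes $U(\phi)V(\phi)=h(\phi)-\gamma$, which after adding the relation $g(X)\equiv0$ is the same as requiring $U(X)V(X)=X^n-\gamma(X)$ as polynomials. But by the defining property of $h$ as the lexicographically earliest polynomial with $X^n-h(X)$ root-free, any $\gamma(X)$ lex-earlier than $h(X)$ forces $X^n-\gamma(X)$ to have a root in $\phi$. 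Factoring that root out leaves a degree-$(n-1)$ polynomial, which factors completely by the $i+j<n$ argument applied to $n-1$. Grouping the $n$ linear factors into blocks of size $i$ and $j$ produces the required $U$ and $V$.

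The main obstacle, and really the only non-mechanical step, is recognizing the correspondence between the ordinal-theoretic condition $\gamma<[h(\phi)]$ and the purely algebraic condition that $X^n-\gamma(X)$ must have a root; everything else is a formal manipulation of the identity $A'B+AB'-A'B'=AB-(A-A')(B-B')$ together with routine facts about the minimality of $n$ and the lex-earliness of $h$. A minor bookkeeping point is checking that $U,V$ being \emph{monic} of degrees $i,j$ is exactly what makes $A'=\phi^i-U(\phi)$ and $B'=\phi^j-V(\phi)$ lie in $[\phi^i]$ and $[\phi^j]$ respectively, but this is immediate from the correspondence between ordinals in $\widetilde\phi$ and polynomials of degree $<n$.
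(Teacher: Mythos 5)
Your proposal is correct and is essentially the paper's own argument: the identity $A'B+AB'-A'B'=AB-(A-A')(B-B')$ is just the paper's expansion $m=x^im_2+x^jm_1-m_1m_2$ in disguise, and both proofs reduce to factoring $x^{i+j}-\gamma(x)$ into linear factors (via minimality of $n$ when $i+j<n$, and via the lex-minimality of $h$ plus minimality of $n$ when $i+j=n$) and grouping them into monic factors of degrees $i$ and $j$.
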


\begin{proof}
Let $h(x)\in\phi[x]$ be the lexicographically earliest polynomial such that $g(x)=x^n-h(x)$ has no root in $\phi$.

If $i+j<n$, then $[\phi^i][\phi^j]=[\phi^{i+j}]$. If $\gamma<[\phi^{i+j}]$, then for some polynomial $m(x)\in\phi[x]$ of degree less than $i+j$, we have $\gamma=[m(\phi)]$. Let $f(x)=x^{i+j}-m(x)$; then $f(x)$ is a monic polynomial of degree less than $n$. So $f$ can be factored into linear factors over $\phi$. Assume $f(x)=f_1(x)f_2(x)$, where $f_1(x),f_2(x)\in\phi[x]$, $f_1$ is monic of degree $i$, and $f_2$ is monic of degree $j$.

Let $m_1(x)=x^i-f_1(x)$ (which has degree less than $i$), and let $m_2(x)=x^j-f_2(x)$ (which has degree less than $j$). Then $m(x)=x^{i+j}-f(x)=x^{i+j}-(x^i-m_1(x))(x^j-m_2(x))=x^im_2(x)+x^jm_1(x)-m_1(x)m_2(x)$. So we have $\gamma=[\phi^i][m_2(\phi)]+[m_1(\phi)][\phi^j]-[m_1(\phi)][m_2(\phi)]$, and we've rewritten $\gamma$ in the desired form. So $\{[\phi^i],[\phi^j]\}$ has the MEX property if $i+j<n$.

Now assume $i+j=n$. Then $[\phi^i][\phi^j]=[h(\phi)]$. If $\gamma<[h(\phi)]$, then $\gamma=[m(\phi)]$ for some polynomial $m(x)\in\phi[x]$ that is lexicographically earlier than $h(x)$. Let $f(x)=x^{i+j}-m(x)$; then $f(x)$ can be factored into linear factors over $\phi$. We then proceed as before; let $f(x)=f_1(x)f_2(x)$, where $f_1$ is monic of degree $i$, and $f_2$ is monic of degree $j$. Then if
$m_1(x)=x^i-f_1(x)$ (which has degree less than $i$) and $m_2(x)=x^j-f_2(x)$ (which has degree less than $j$), then we have $\gamma=[\phi^i][m_2(\phi)]+[m_1(\phi)][\phi^j]-[m_1(\phi)][m_2(\phi)]$. We've rewritten $\gamma$ in the desired form, so $\{[\phi^i],[\phi^j]\}$ has the MEX property if $i+j=n$.
\end{proof}

This essentially establishes that we have the correct definition of multiplication in $\widetilde{\phi}$, when the preceding field $\phi$ is not algebraically closed, and multiplication in $\phi$ has already been determined. If $\widetilde{\phi}=[\phi^n]$, then all pairs $\{[\phi^i],[\phi^j]\}$ (where $i+j\le n$) satisfy the MEX property. And by induction, the products of such pairs determine all products
$[\phi^i][\phi^j]$ when $i+j>n$; if $\displaystyle[\phi^i][\phi^j]=\sum _{k=0}^{n-1}\phi^k\gamma_k$ is known, and if $j<n$, then we are forced to have $\displaystyle[\phi^i][\phi^{j+1}]=[\phi^i][\phi^j]\phi=\left(\sum _{k=0}^{n-1}\phi^k\gamma_k\right)\phi=\sum _{k=0}^{n-1}\phi^{k+1}\gamma_k$. Finally, if $\alpha,\beta\in\widetilde{\phi}$ are groups, then $\alpha=[\phi^ia]$ and $\beta=[\phi^jb]$, where $0\le i,j<n$, and $a,b\in\phi$ are
groups. Then $\alpha\beta$ is determined by all the aforementioned rules: $\displaystyle\alpha\beta=[\phi^ia][\phi^jb]=[\phi^i][\phi^j](ab)=\left(\sum _{k=0}^{n-1}\phi^k\gamma_k\right)(ab)=\left[\sum _{k=0}^{n-1}\phi^kc_k\right]$, where $c_k=\gamma_kab$ (a product in $\phi$). So the products of all groups in $\widetilde{\phi}$ are determined by the above rules, and thus, so is the entire multiplication table of $\widetilde{\phi}$ (by the
distributive law).

We still must consider the case where $\phi$ is algebraically closed.

\begin{lemma}
If $\phi\in \textbf{On}_p$ is an algebraically closed field, then for all $i,j\in\omega$, the pair $\{[\phi^i],[\phi^j]\}$ has the MEX property.
\end{lemma}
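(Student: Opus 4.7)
The plan is to mimic the proof of the preceding lemma for finite extensions, replacing the role of the defining polynomial $g(x)$ with full algebraic closure. By Section 1, there is a ring isomorphism $\theta:\widetilde{\phi}\to\phi(x)$, and because the second sum in the partial-fraction form of $\theta(\alpha)$ records monomials $\delta_j x^{m_j}$ coming from terms $\delta_j\phi^{m_j}$ with $m_j\in\omega$, we have $\theta([\phi^i])=x^i$ for each $i\in\omega$. Consequently $[\phi^i][\phi^j]=\theta^{-1}(x^i\cdot x^j)=[\phi^{i+j}]$, so the MEX property reduces to showing that every $\gamma<[\phi^{i+j}]$ can be written as $\alpha'[\phi^j]+[\phi^i]\beta'-\alpha'\beta'$ for some $\alpha'<[\phi^i]$ and $\beta'<[\phi^j]$.

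First I would observe that any $\gamma<[\phi^{i+j}]$ has a base-$\phi$ expansion $[\sum_{k=0}^{i+j-1}\phi^k c_k]$ with $c_k\in\phi$; these correspond under $\theta$ exactly to polynomials $m(x)=\sum_{k=0}^{i+j-1} c_k x^k\in\phi[x]$ of degree strictly less than $i+j$. (One needs to note that such a $\gamma$ lies in the image of the "polynomial part" of $\theta^{-1}$, not the "partial-fraction part", because its base-$\phi$ expansion only uses exponents $<i+j<\omega$.)

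Next, set $f(x)=x^{i+j}-m(x)$, a monic polynomial of degree $i+j$ in $\phi[x]$. Since $\phi$ is algebraically closed, $f$ splits into linear factors over $\phi$, and in particular factors as $f(x)=f_1(x)f_2(x)$ with $f_1,f_2\in\phi[x]$ monic of degrees $i$ and $j$ respectively. Defining $m_1(x)=x^i-f_1(x)$ (degree $<i$) and $m_2(x)=x^j-f_2(x)$ (degree $<j$), the same algebraic identity as in the previous lemma gives
\[
m(x)=x^i m_2(x)+x^j m_1(x)-m_1(x)m_2(x).
\]
Applying $\theta^{-1}$ and using that $\theta$ is a ring isomorphism yields $\gamma=[\phi^i]\beta'+\alpha'[\phi^j]-\alpha'\beta'$, where $\alpha'=\theta^{-1}(m_1(x))$ and $\beta'=\theta^{-1}(m_2(x))$. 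The bounds $\alpha'<[\phi^i]$ and $\beta'<[\phi^j]$ follow from the degree bounds on $m_1,m_2$ together with the correspondence between base-$\phi$ expansions and polynomials established in the first step. This shows $\gamma$ lies in the MEX set, completing the proof.

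The main thing to be careful about is not substantive mathematics but bookkeeping with the isomorphism $\theta$: specifically, checking that the subset of $\widetilde{\phi}$ lying strictly below $[\phi^{i+j}]$ corresponds cleanly to $\phi[x]_{<i+j}$ under $\theta$, so that the factorization argument translates back to an ordinal-level identity. Compared with the previous lemma, no case split on whether $i+j$ is less than, equal to, or greater than some bound is needed, because algebraic closure provides the required factorization uniformly.
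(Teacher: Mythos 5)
Your proof is correct and follows exactly the route the paper intends: the paper simply remarks that the argument is the same as the $i+j<n$ case of the preceding lemma, which is precisely the factorization $f(x)=x^{i+j}-m(x)=f_1(x)f_2(x)$ and the identity $m(x)=x^im_2(x)+x^jm_1(x)-m_1(x)m_2(x)$ that you carry out, with algebraic closure supplying the splitting in place of the degree bound. Your extra care about the correspondence under $\theta$ between ordinals below $[\phi^{i+j}]$ and polynomials of degree less than $i+j$ is a reasonable bookkeeping check that the paper leaves implicit.
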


The proof is essentially the same as for the $i+j<n$ case in the previous lemma. So, the products $[\phi^i][\phi^j]=[\phi^{i+j}]$ all satisfy the MEX property, and they establish the fact that $[\phi^i]=\phi^i$ for all $i\in\omega$.

\begin{lemma}
If $\phi\in \textbf{On}_p$ is an algebraically closed field, then for all $\alpha\in\phi$, the pair $\{[\phi^{\omega+\omega\alpha}],\phi\}$ has the MEX property.
\end{lemma}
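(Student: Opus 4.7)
The plan is to translate the whole statement through the isomorphism $\theta:\widetilde\phi\to\phi(x)$ built in Section~1, under which $[\phi^{\omega+\omega\alpha}]$ corresponds to $\frac{1}{x-\alpha}$ and $\phi$ corresponds to $x$. The product $[\phi^{\omega+\omega\alpha}]\cdot\phi$ then corresponds to $\frac{x}{x-\alpha}=1+\frac{\alpha}{x-\alpha}$, so as an ordinal
\[
[\phi^{\omega+\omega\alpha}]\cdot\phi=[\phi^{\omega+\omega\alpha}\alpha+1].
\]
The MEX property is the statement that every $\gamma<[\phi^{\omega+\omega\alpha}\alpha+1]$ admits a representation $\gamma=\eta\phi+[\phi^{\omega+\omega\alpha}]\beta'-\eta\beta'=\eta(\phi-\beta')+[\phi^{\omega+\omega\alpha}]\beta'$ with $\eta<[\phi^{\omega+\omega\alpha}]$ and $\beta'<\phi$; under $\theta$ this is
\[
\theta(\gamma)=\theta(\eta)(x-\beta')+\frac{\beta'}{x-\alpha}.
\]

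First I would decode the inequality $\gamma<[\phi^{\omega+\omega\alpha}\alpha+1]$, which is equivalent to $\gamma\le[\phi^{\omega+\omega\alpha}\alpha]$. Reading off the $\phi$-adic expansion of $\gamma$, this forces $\theta(\gamma)$ to have the form $\frac{c}{x-\alpha}+h(x)$ where $c\in\phi$ with $c\le\alpha$ ordinally, $h(x)\in\phi(x)$ has all its partial-fraction poles at points $\alpha_i<\alpha$ (together with a polynomial part), and in the boundary case $c=\alpha$ one moreover has $h=0$. Any pole of $h$ at some $\alpha_i\ge\alpha$, or a coefficient $c>\alpha$, would make $\gamma$ exceed $[\phi^{\omega+\omega\alpha}\alpha]$.

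The construction of $\eta$ and $\beta'$ then splits by cases. If $c=\alpha$ (so $h=0$), take $\eta=0$ and $\beta'=\alpha$; both sides reduce to $[\phi^{\omega+\omega\alpha}]\alpha$. Otherwise set $\beta'=c$ and $\eta=\theta^{-1}\bigl(h(x)/(x-c)\bigr)$, and the identity $\theta(\eta)(x-c)+\frac{c}{x-\alpha}=h(x)+\frac{c}{x-\alpha}=\theta(\gamma)$ is immediate. The step I expect to demand the most care is verifying that $\eta<[\phi^{\omega+\omega\alpha}]$, which reduces to checking that $h(x)/(x-c)$ has no partial-fraction pole at any point $\ge\alpha$. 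Since $c<\alpha$ and every pole of $h$ already lies strictly below $\alpha$, dividing by $x-c$ can only deepen a pole at $c$ (when $c$ coincides with some $\alpha_i$) or introduce a new simple pole at $c$ from the polynomial part, both still at positions $<\alpha$; therefore $\eta<[\phi^{\omega+\omega\alpha}]$ and the MEX property holds.
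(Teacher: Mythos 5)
Your proposal is correct and follows essentially the same route as the paper: compute $[\phi^{\omega+\omega\alpha}]\phi=[\phi^{\omega+\omega\alpha}\alpha+1]$ via the partial-fraction isomorphism, split $\gamma\le[\phi^{\omega+\omega\alpha}\alpha]$ into the boundary case $\frac{\alpha}{\phi-\alpha}$ (take $\eta=0$, $\beta'=\alpha$) and the case $\frac{c}{\phi-\alpha}+h(\phi)$ with $c<\alpha$ and poles of $h$ below $\alpha$ (take $\beta'=c$, $\eta=\theta^{-1}(h(x)/(x-c))$), and check the divided function still has poles below $\alpha$. This is exactly the paper's choice $\phi'=\alpha'$, $g(x)=f(x)/(x-\alpha')$, just phrased constructively.
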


\begin{proof}
We have $[\phi^{\omega+\omega\alpha}]\phi=(\frac{1}{\phi-\alpha})\phi=\frac{\alpha}{\phi-\alpha}+1=[\phi^{\omega+\omega\alpha}\alpha+1]$. If $\gamma<[\phi^{\omega+\omega\alpha}\alpha+1]$, then either $\gamma=[\phi^{\omega+\omega\alpha}\alpha]=\frac{\alpha}{\phi-\alpha}$, or $\gamma=\frac{\alpha'}{\phi-\alpha}+f(\phi)$, where $\alpha'<\alpha$, and $f(x)\in\phi(x)$ is a rational function whose poles are all less than $\alpha$.

The typical element of the MEX set of $\{[\phi^{\omega+\omega\alpha}],\phi\}$ has the form
$g(\phi)\phi+\frac{1}{\phi-\alpha}\phi'-g(\phi)\phi'$, where $\phi'<\phi$, and $g(x)\in\phi(x)$ is a rational function whose poles are all less than $\alpha$. We may set $\phi'=\alpha'$, obtaining
$g(\phi)\phi+\frac{1}{\phi-\alpha}\alpha'-g(\phi)\alpha'=\frac{\alpha'}{\phi-\alpha}+g(\phi)(\phi-\alpha')$. And this will equal $\frac{\alpha'}{\phi-\alpha}+f(\phi)$ when $g(x)=\frac{f(x)}{x-\alpha'}$ (which is indeed a rational function whose poles are all less than $\alpha$). That covers all possible values of $\gamma$ except $\gamma=\frac{\alpha}{\phi-\alpha}$, which we may obtain by setting $\phi'=\alpha$ and $g(x)=0$. So every value of $\gamma$ is in the MEX set; $\{[\phi^{\omega+\omega\alpha}],\phi\}$ has the MEX property.
\end{proof}

So, the products $[\phi^{\omega+\omega\alpha}]\phi=[\phi^{\omega+\omega\alpha}\alpha+1]$ all satisfy the MEX property, and they establish the fact that, for all $\alpha\in\phi$, $[\phi^{\omega+\omega\alpha}]=\frac{1}{(\phi-\alpha)}$.

\begin{lemma}
If $\phi\in \textbf{On}_p$ is an algebraically closed field, then for all $\alpha\in\phi$ and all positive integers $n$, the pair $\{[\phi^{\omega+\omega\alpha+n}],\phi\}$ has the MEX property.
\end{lemma}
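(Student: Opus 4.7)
The plan is to use the isomorphism $\theta\colon\widetilde{\phi}\to\phi(x)$ from Section 1 to transform the MEX condition into an identity between rational functions. Since $\theta([\phi^{\omega+\omega\alpha+n}])=1/(x-\alpha)^{n+1}$, we compute $\theta([\phi^{\omega+\omega\alpha+n}]\phi)=x/(x-\alpha)^{n+1}=\alpha/(x-\alpha)^{n+1}+1/(x-\alpha)^n$, so $[\phi^{\omega+\omega\alpha+n}]\phi=[\phi^{\omega+\omega\alpha+n}\alpha+\phi^{\omega+\omega\alpha+n-1}]$. A generic element of the MEX set has the form $\alpha_1(\phi-\phi')+[\phi^{\omega+\omega\alpha+n}]\phi'$, whose $\theta$-image is $\theta(\alpha_1)(x-\phi')+\phi'/(x-\alpha)^{n+1}$; here $\phi'$ ranges over $\phi$, and $\theta(\alpha_1)$ ranges over rational functions whose poles lie only at points $\leq\alpha$ (with order at most $n$ at $\alpha$ itself), plus an arbitrary polynomial part.

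I would split every $\gamma<[\phi^{\omega+\omega\alpha+n}\alpha+\phi^{\omega+\omega\alpha+n-1}]$ according to the coefficient $c\in\phi$ of $\phi^{\omega+\omega\alpha+n}$ in its base-$\phi$ Cantor normal form. In \textbf{Case (A)}, $c<\alpha$, so $\gamma=[\phi^{\omega+\omega\alpha+n}c+\eta]$ with $\eta<[\phi^{\omega+\omega\alpha+n}]$ (including $c=0$, which just means $\gamma<[\phi^{\omega+\omega\alpha+n}]$). I would choose $\phi'=c$ and solve $\theta(\alpha_1)=\theta(\eta)/(x-c)$: dividing by $x-c$ leaves the pole at $\alpha$ unchanged (since $c\neq\alpha$) and at worst introduces or enlarges a pole at $c$, a point strictly less than $\alpha$, so $\alpha_1<[\phi^{\omega+\omega\alpha+n}]$. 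In \textbf{Case (B)}, $c=\alpha$, so $\gamma=[\phi^{\omega+\omega\alpha+n}\alpha+\eta]$ with $\eta<[\phi^{\omega+\omega\alpha+n-1}]$. I would choose $\phi'=\alpha$ and solve $\theta(\alpha_1)=\theta(\eta)/(x-\alpha)$; since $\theta(\eta)$ has pole of order at most $n-1$ at $\alpha$, the quotient has pole of order at most $n$ there, so again $\alpha_1<[\phi^{\omega+\omega\alpha+n}]$.

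The only genuinely delicate subcase, and hence the main obstacle, is the boundary $\alpha=0$: then Case (A) is vacuous and Case (B) forces $\phi'=0$, making $\theta(\alpha_1)=\theta(\eta)/x$, and one must verify that the new pole at $0$ has order at most $n$. This works because when $\alpha=0$ we have $\eta<[\phi^{\omega+n-1}]$, so $\theta(\eta)$ has pole of order at most $n-1$ at $0$, and division by $x$ yields order at most $n$. The analogous delicate point in Case (A) with $c=0$ and $\alpha>0$ is automatic: any pole at $0<\alpha$ is admissible in $\alpha_1$ at arbitrary order. Everything else reduces to routine bookkeeping of pole orders under partial-fraction manipulation, matching the pattern used in the previous lemma (the $n=0$ case).
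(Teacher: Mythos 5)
Your proposal is correct and follows essentially the same route as the paper: split $\gamma$ according to whether the coefficient of $[\phi^{\omega+\omega\alpha+n}]$ equals $\alpha$ or is some $\alpha'<\alpha$, take $\phi'$ to be that coefficient, and take $\alpha_1=\theta^{-1}\bigl(\theta(\eta)/(x-\phi')\bigr)$, checking the pole orders stay admissible. Your extra attention to the $\alpha=0$ boundary and to the polynomial part is just a more explicit version of the same bookkeeping the paper leaves implicit.
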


\begin{proof}
We have $[\phi^{\omega+\omega\alpha+n}]\phi=\frac{1}{(\phi-\alpha)^{n+1}}\phi=\frac{1}{(\phi-\alpha)^n}\frac{\phi}{\phi-\alpha}
=\frac{1}{(\phi-\alpha)^n}(\frac{\alpha}{\phi-\alpha}+1)=\frac{\alpha}{(\phi-\alpha)^{n+1}}+\frac{1}{(\phi-\alpha)^n}
=[\phi^{\omega+\omega\alpha+n}\alpha+\phi^{\omega+\omega\alpha+n-1}]$. If $\gamma$ is a smaller ordinal, then $\gamma$ must take one of the following two forms:

1. $\gamma=\frac{\alpha}{(\phi-\alpha)^{n+1}}+f(\phi)$, where all poles of $f(x)\in\phi(x)$ are at most $\alpha$, and if $\alpha$ is a pole of $f(x)$, then it has degree at most $n-1$.

2. $\gamma=\frac{\alpha'}{(\phi-\alpha)^{n+1}}+f(\phi)$, where $\alpha'<\alpha$, all poles of $f(x)\in\phi(x)$ are at most $\alpha$, and if $\alpha$ is a pole of $f(x)$, then it has degree at most $n$.

Meanwhile, the typical element of the MEX set of $\{[\phi^{\omega+\omega\alpha+n}],\phi\}$ has the form
$g(\phi)\phi+\frac{1}{(\phi-\alpha)^{n+1}}\phi'-g(\phi)\phi'$, where $\phi'<\phi$, all poles of $g(x)\in\phi(x)$ are at most $\alpha$, and if $\alpha$ is a pole of $g(x)$, then it has degree at most $n$. This typical element of the MEX set will equal $\frac{\alpha}{(\phi-\alpha)^{n+1}}+f(\phi)$ when $\phi'=\alpha$ and $g(x)=\frac{f(x)}{x-\alpha}$. And this typical element of the MEX set will equal $\frac{\alpha'}{(\phi-\alpha)^{n+1}}+f(\phi)$ when $\phi'=\alpha'$ and $g(x)=\frac{f(x)}{x-\alpha'}$. So all values of $\gamma$ are in the MEX set; $\{[\phi^{\omega+\omega\alpha+n}],\phi\}$ has the MEX property.
\end{proof}

So, the products $[\phi^{\omega+\omega\alpha+n}]\phi=[\phi^{\omega+\omega\alpha+n}\alpha+\phi^{\omega+\omega\alpha+n-1}]$ all satisfy the MEX property. And they establish the fact that, for all $\alpha\in\phi$, $[\phi^{\omega+\omega\alpha+n}]=\frac{1}{(\phi-\alpha)^{n+1}}$.

These facts are enough to determine the products of all groups in $[\phi^\phi]$ (and hence, to determine the product of all elements of $[\phi^\phi]$), assuming that multiplication in $\phi$ has already been determined. Any group in $[\phi^\phi]$ has the form $[\phi^\alpha\beta]$, where $\alpha,\beta\in\phi$, and $\beta$ is a group. We have $[\phi^\alpha\beta]=[\phi^\alpha]\beta$, and based on the above lemmas, $[\phi^\alpha]=f(\phi)$ for some $f(x)\in\phi(x)$. Thus, $[\phi^\alpha]\beta=f(\phi)\beta$, and the product of two such rational functions of $\phi$ is determined. So multiplication in $\textbf{On}_p$ is completely determined by this collection of products that satisfy the MEX property.

However, there are pairs of elements in $\textbf{On}_p$ (for $p\ge 3$) that do not satisfy the MEX property. The simplest exception: in $\textbf{On}_3$, we have $3(3)=2$ (as we will see later), so $4(4)=(3+1)(3+1)=2+3+3+1=6$. But the minimal excludent of the set $\{\alpha'\beta+\alpha\beta'-\alpha'\beta'\}$, where $\alpha'$ and $\beta'$ range over all ordinals less than $4$, can be shown to be $2$. So we certainly cannot use $\alpha\beta=\text{mex}\{\alpha'\beta+\alpha\beta'-\alpha'\beta'\}$ as a genetic definition of multiplication in $\textbf{On}_p$.

It might be suspected that, if not all pairs of elements in $\textbf{On}_p$ satisfy the MEX property, then perhaps all pairs of \textbf{groups} in $\textbf{On}_p$ satisfy the MEX property. We would then be able to extend from products of groups to the full multiplication table, via the distributive property. But this also turns out to be false. In fact, there are pairs of \textbf{rings} that don't satisfy the MEX
property. (However, from the previous lemmas, all pairs of fields do satisfy the MEX property.)

\begin{theorem}
For any prime $p\ge 3$, if $\phi\in \textbf{On}_p$ is an algebraically closed field, then the pair $\{[\phi^{\omega\cdot 2}],[\phi^\omega]\}$ does not satisfy the MEX property.
\end{theorem}

\begin{proof}
We have $[\phi^{\omega\cdot 2}][\phi^\omega]=\frac{1}{\phi-1}\frac{1}{\phi}=\frac{1}{\phi-1}-\frac{1}{\phi}=\frac{1}{\phi-1}+\frac{[p-1]}{\phi}=[\phi^{\omega\cdot 2}+\phi^\omega(p-1)]$. We will show that $\gamma=[\phi^{\omega\cdot 2}+\phi^\omega]=\frac{1}{\phi-1}+\frac{1}{\phi}$ is not in the MEX set of $\{[\phi^{\omega\cdot 2}],[\phi^\omega]\}$.

An arbitrary element of the MEX set of $\{[\phi^{\omega\cdot 2}],[\phi^\omega]\}$ has the form
$g_1(\phi)\frac{1}{\phi}+\frac{1}{\phi-1}g_0(\phi)-g_1(\phi)g_0(\phi)$, where $g_0(x),g_1(x)\in\phi(x)$, $g_0(x)$ is a polynomial, and $g_1(x)$ has no poles except for possibly 0. We must choose appropriate rational functions $g_0(x)$ and $g_1(x)$ so that $f(x)=g_1(x)(\frac{1}{x}-g_0(x))+\frac{1}{x-1}g_0(x)$ equals $\frac{1}{x-1}+\frac{1}{x}$.

If $g_1(x)$ has a pole at 0, then $f(x)$ would have a pole at 0 of degree at least two. So $g_1(x)$ must be a polynomial. Furthermore, if $g_1(x)=a+xh_1(x)$, and $g_0(x)=b+(x-1)h_0(x)$, then we would have $f(x)=\frac{b}{x-1}+\frac{a}{x}+h(x)$ for some polynomial $h(x)$. So we must have $a=b=1$. Plugging those things in and simplifying, we find that $f(x)=\frac{1}{x-1}+\frac{1}{x}+h(x)$, where $$h(x)=(-x^2+x)h_1(x)h_0(x)+(-x+1)h_1(x)+(-x+2)h_0(x)-1.$$ But it is then impossible to have $h(x)=0$; if either $h_1(x)$ or $h_0(x)$ is a
nonzero polynomial, then $h(x)$ would be a polynomial of degree at least one. And if $h_1(x)=h_0(x)=0$, then we would have $h(x)=-1$. So regardless of the choice of $g_1(x)$ and $g_0(x)$, $f(x)$ cannot equal $\frac{1}{x-1}+\frac{1}{x}$.

Thus, $\gamma$ is not in the MEX set of $\{[\phi^{\omega\cdot 2}],[\phi^\omega]\}$; so the pair $\{[\phi^{\omega\cdot 2}],[\phi^\omega]\}$ does not satisfy the MEX property.
\end{proof}

All this raises the question: is there a purely genetic definition of multiplication in $\textbf{On}_p$? Given that the property $\alpha\beta=\text{mex}\{\alpha'\beta+\alpha\beta'-\alpha'\beta'\}$ does not even hold for rings, finding a genetic definition of multiplication would seem to be very difficult. So we shall have to rely on the inductive definitions.

\section{The algebraic closure of $p$ in $\textbf{On}_p$}

We will now focus on the structure of $\textbf{On}_p$ below the first transcendental. As we've seen, all ordinals that are fields, but not algebraically closed, will define algebraic extensions of themselves. All ordinals that are algebraically closed fields define transcendental extensions of themselves; if $\phi$ is an algebraically closed field, then since $\phi$ is not an element of itself, $\phi$ must be transcendental over itself! So we will refer to such elements $\phi$ as ``transcendentals'' in $\textbf{On}_p$.

In \cite{Conway}, Conway describes the general structure on $\textbf{On}_2$ below the first transcendental, $[\omega^{\omega^\omega}]$. For example, the first fields in $\textbf{On}_2$ are of the form $[2^{2^n}]$, and each is a quadratic extension of the previous one: $2^2=3$, $4^2=6$, $16^2=24$, $256^2=384$, and so on. (Each field, when squared, produces the [sesquimultiple] of that field.) The next fields are of the
form $[\omega^{3^n}]$, and each is a cubic extension of the previous one: $\omega^3=2$, $[\omega^3]^3=\omega$, $[\omega^9]^3=[\omega^3]$, etc. Then we have the quintic extensions, and so on. In this section, we will see that for any prime $p$, the field $\textbf{On}_p$ has a similar structure below the first transcendental.

Borrowing the notation in \cite{Lenstra2}, if $r=[u^n]$ is a prime [power] ($u$ prime, $n$ a positive integer), and if $k$ is the number of primes less than $u$ (so $k=0$ if $u=2$, $k=1$ if $u=3$, etc.), then we write $\chi_r=[p^{(\omega^k u^{n-1})}]$. Note that we then have $\chi_r=[\omega^{(\omega^{k-1} u^{n-1})}]$ if $k\ge 1$, so there will be no ambiguity in writing $\chi_r$ without reference to $p$ as long
as $u\ge 3$. (If $u=2$, and if there is a chance of ambiguity, we will write $\chi_{r,p}$ for $[p^{2^{n-1}}]$.)

Note that we have $\chi_{[u_1^{n_1}]}<\chi_{[u_2^{n_2}]}$ if and only if either $u_1<u_2$, or $u_1=u_2$ and $n_1<n_2$. So, we have $\chi_2\subseteq\chi_4\subseteq\chi_8\subseteq\cdots\subseteq\chi_3\subseteq\chi_9\subseteq\cdots\subseteq\chi_5\subseteq\chi_{25}\subseteq\cdots$. Also, the supremum of all the elements $\chi_r$ is $[p^{\omega^\omega}]=[\omega^{\omega^\omega}]$.

Our main goal of this section is to prove the following theorem.

\begin{theorem}\label{closureofp}
The following hold for all primes $p$:
\begin{enumerate}
  \item
  The first transcendental in $\textbf{On}_p$ is $[\omega^{\omega^\omega}]$.
  \item
  The ordinals in $[\omega^{\omega^\omega}]\in \textbf{On}_p$ that are fields are exactly the $\chi_r$ for prime [powers] $r$.
  \item
  For each prime $u$ and each integer $n\ge 2$, $\chi_{[u^n]}$ is a $u$th degree extension of $\chi_{[u^{n-1}]}$. Also, $\chi_{[u^n]}$ is closed under all extensions of degree $u'$, for any prime $u'<u$.
  \item
  If $p\ne u$, then we have $(\chi_u)^u=\alpha_u$, where $\alpha_u$ is the smallest ordinal in $\chi_u$ with no $u$th root in $\chi_u$. 
  \item
  We have $(\chi_{[u^{n+1}]})^u=\chi_{[u^n]}$ whenever $p\ne u$ and $n\ge 1$, except in the case where $[u^{n+1}]=4$ and $p\equiv 3$ mod $4$.
  \item
  We have $(\chi_4)^2=\chi_2+1$ when $p\equiv 3$ mod $4$.
  \item
  Finally, we have the case where $p=u$: for $n\ge 2$, we have $\displaystyle(\chi_{[p^n]})^p=\chi_{[p^n]}+\prod_{k=1}^{n-1}(\chi_{[p^k]})^{[p-1]}=\chi_{[p^n]}+\left[(\chi_p)^{p^n-1}\right]$. If $n=1$, then we have $(\chi_p)^p=\chi_p+1$.
\end{enumerate}
\end{theorem}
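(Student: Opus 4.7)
The plan is to prove all seven parts simultaneously by a transfinite induction that aligns the Section 1 construction $\phi_0 \subset \phi_1 \subset \cdots$ with the proposed enumeration $\chi_2 \subset \chi_4 \subset \cdots \subset \chi_3 \subset \chi_9 \subset \cdots$. The base case is $\phi_0 = p = \chi_2$; at each limit step, a direct ordinal-arithmetic computation gives $\sup_n \chi_{[u^n]} = \chi_{u'}$ for $u'$ the next prime, and the global supremum of the tower is $[p^{\omega^\omega}] = [\omega^{\omega^\omega}]$. Parts 1 and 2 will then follow together: the tower enumerates every field, and the global supremum is the first algebraically closed field (hence the first transcendental) because by part 3 applied at every level it admits no proper extension of any prime degree.

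For each successor step, starting from $\phi = \chi_{[u^n]}$, I first establish part 3 by showing the minimal degree of an irreducible polynomial over $\phi$ is $u$. For smaller prime degrees $u' < u$: inductively each $\chi_{[u'^m]}$ is a cyclic degree-$u'$ extension of its predecessor, and a finite-field Galois argument on their supremum $\chi_{u''}$ (with $u''$ the next prime) shows all degree-$u'$ extensions are already absorbed. The intervening degree-$u$ tower up to $\chi_{[u^n]}$ is Galois of $u$-power order, which by standard Galois theory cannot reintroduce smaller-prime irreducibles, so the minimal degree over $\chi_{[u^n]}$ is indeed $u$. The lex-earliest polynomial $g(x) = x^u - h(x)$ then reduces to $g(x) = x^u - c$ with $c$ the smallest non-$u$-th-power in $\phi$, because each intermediate coefficient of $h$ can be forced to zero while leaving an irreducible choice available. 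For parts 4 and 5 (the generic $p \neq u$ case), $n = 0$ gives $c = \alpha_u$ by definition; for $n \geq 1$, Kummer theory on finite subfields shows every nonzero element of $\chi_{[u^{n-1}]}$ is already a $u$-th power in $\chi_{[u^n]}$ (using the identity $1 + q + \cdots + q^{u-1} \equiv 0 \pmod{u}$ whenever $u \mid q - 1$), while the generator $\chi_{[u^{n-1}]}$ itself cannot be a $u$-th power --- otherwise an iterated $u^2$-th root of $\alpha_u$ would sit in a Galois subextension of too small a degree over $\chi_u$. Hence $c = \chi_{[u^{n-1}]}$, giving part 5.

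Part 6 is the failure of this analysis when $u = 2$, $n = 1$, $p \equiv 3 \pmod 4$: the equation $y^2 = p$ in $\chi_4 \cong \FF_p[x]/(x^2 - \alpha_2)$ reduces to $a^4 = -\alpha_2/4$, which admits a solution in $\FF_p$ precisely when $-1$ is a non-square, so $p$ unexpectedly becomes a square and a direct computation identifies $p + 1 = \chi_2 + 1$ as the smallest non-square in $\chi_4$. Part 7 ($p = u$) requires a parallel Artin--Schreier analysis, since $x^p - c = (x - c^{1/p})^p$ is always reducible over the perfect field $\FF_p \subseteq \chi_{[p^n]}$; the lex-earliest irreducible polynomial is thus $g(x) = x^p - x - c$, irreducible iff $c$ lies outside the image of the Artin--Schreier map $\wp(y) = y^p - y$. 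For $n = 1$, $\wp(\FF_p) = \{0\}$ forces $c = 1$ and gives $(\chi_p)^p = \chi_p + 1$; for $n \geq 2$, an inductive computation of $\chi_{[p^n]}/\wp(\chi_{[p^n]})$ identifies the obstruction class as $\prod_{k=1}^{n-1}(\chi_{[p^k]})^{p-1}$. The main obstacle of the proof is this last identification, together with a careful verification that no lex-earlier prefix of $h$ admits an irreducible polynomial: tracking the cokernel of iterated Artin--Schreier maps up the tower of degree-$p$ extensions is substantially more delicate than the Kummer case and forms the technical heart of the argument.
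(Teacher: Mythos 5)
Your overall architecture (induct along the Section 1 tower, show the lexicographically earliest polynomial collapses to $x^u-c$ in the Kummer case and to $x^p-x-c$ in the Artin--Schreier case, then identify the smallest admissible $c$) is the same skeleton as the paper's, and your positive direction for part 5 --- every element of the previous field is a $u$th power one level up, via $u\mid 1+q+\cdots+q^{u-1}$ when $u\mid q-1$ --- is a correct, clean variant of the paper's order-theoretic argument. But the negative claims, which are the actual content of parts 4--6, are not proved. That $\alpha_u$ exists at all (i.e.\ that $\chi_u$, a union of finite fields whose degrees have only prime factors $<u$, is not closed under $u$th roots) is not ``by definition''; the paper devotes a lemma to it, resting on a lifting-the-exponent fact that the $u$-part of $[p^s-1]$ is bounded as $s$ ranges over such degrees. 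More seriously, your one-line argument that the generator $\chi_{[u^n]}$ cannot be a $u$th power in $\chi_{[u^{n+1}]}$ (``an iterated root of $\alpha_u$ would sit in a Galois subextension of too small a degree'') does not work by degree counting: $\chi_{[u^{n+1}]}=\chi_u(\chi_{[u^n]})$ has degree exactly $u^n$ over $\chi_u$, which is precisely where a $u^{n+1}$-st root of $\alpha_u$ would live if it existed, so the ``too small'' contradiction only appears once you know $x^{u^{n+1}}-\alpha_u$ stays irreducible over $\chi_u$. That input is either Capelli/Vahlen-type irreducibility of binomials (whose $u=2$, $a\in-4K^4$ exception is exactly part 6) or, as in the paper, an explicit computation of $u$-adic valuations of orders versus $[p^{u^nt}-1]$ via the LTE lemma; you supply neither, and the fact that the claim genuinely fails for $u=2$, $p\equiv3\pmod4$ shows the missing ingredient is arithmetic, not Galois-theoretic. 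You also do not address how the induction restarts after the exceptional step (the paper checks $ord(\chi_2+1)$ has the full $2$-part of $[p^2-1]$, so $\chi_8^2=\chi_4$), and your part 6 computation has a slip ($a^4=-1/(4\alpha_2)$, not $-\alpha_2/4$) and asserts rather than proves that $\chi_2+1$ is a non-square in $\chi_4$ (the paper uses that $\alpha_2-1$ is a square while $-1$ is not).

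For part 7 you openly defer the decisive step: identifying the cokernel of $\wp(y)=y^p-y$ on $[\chi^{p^n}]$ and showing the smallest element outside the image is $[\chi^{p^{n-1}-1}]=\prod_{k=1}^{n-1}(\chi_{[p^k]})^{[p-1]}$. That is exactly the paper's main computation (its lemma that $\wp(\chi)$ is the set of ordinals $[p\delta]$, proved by writing groups as $\chi_r^m\delta$, plus the counting argument $|{\rm im}\,\wp\cap F|=[|F|/p]$ on finite subfields, and its inductive description of the sets $S_n$); calling it ``the main obstacle'' and ``the technical heart'' without carrying it out leaves the theorem unproved in the $p=u$ case. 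Even your $n=1$ step needs more than $\wp(\FF_p)=\{0\}$ when $p\ge3$, since $\chi_p$ is then an infinite field: one must argue that $1\notin\wp(\chi_p)$ (e.g.\ because a root of $x^p-x-1$ has degree divisible by $p$ over $\FF_p$, while all finite subfields of $\chi_p$ have degrees with prime factors $<p$). So while the blueprint matches the paper, the proposal is missing precisely the arithmetic lemmas that the paper uses to close parts 4--7.
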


For example, consider the structure of $\textbf{On}_3$ below the first transcendental. The first extensions are all by square roots: we have $3^2=2$, $9^2=4$ (not $3$, since $3\equiv 3$ mod $4$), $81^2=9$, $6561^2=81$, and so on. Then we have the cubic extensions; we have $\omega^3=\omega+1$, $[\omega^3]^3=[\omega^3+\omega^2]$, $[\omega^9]^3=[\omega^9+\omega^8]$, $[\omega^{27}]^3=[\omega^{27}+\omega^{26}]$, and so on. Then come the quintic extensions: $[\omega^\omega]^5=10$, $[\omega^{\omega\cdot 5}]^5=[\omega^\omega]$, $[\omega^{\omega\cdot 25}]^5=[\omega^{\omega\cdot 5}]$, etc. And this pattern continues throughout all extensions. (In Section 4, we will look at methods for determining the values of $\alpha_u$ for $u\ne p$: in $\textbf{On}_3$, we have $\alpha_2=2$, $\alpha_5=10$, etc.)

We will prove Theorem \ref{closureofp} by a sequence of lemmas.

\begin{lemma}
Every element of $\textbf{On}_p$ below the first transcendental is contained in a finite field within $\textbf{On}_p$.
\end{lemma}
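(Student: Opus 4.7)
The approach is transfinite induction on $\Delta$, with the inductive claim that $\phi_\Delta$ is an algebraic extension of $\phi_0=\FF_p$ for every $\Delta$ up to and including the first ordinal $\Delta^*$ for which $\phi_{\Delta^*}$ is algebraically closed (so $\phi_{\Delta^*}$ is the first transcendental). Once this is established, the lemma is immediate: any ordinal $\alpha$ below the first transcendental lies in $\phi_{\Delta^*}$, which is both algebraically closed and algebraic over $\FF_p$, so $\alpha$ is algebraic over $\FF_p$ of some finite degree $d$; thus the subfield $\FF_p(\alpha)\subseteq\phi_{\Delta^*}\subseteq On_p$ is a finite field (of order $p^d$) containing $\alpha$.

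\noindent\textbf{Carrying out the induction.} The base case $\phi_0=p=\FF_p$ is trivially algebraic over $\FF_p$. For a successor step $\Delta+1\le\Delta^*$, the field $\phi_\Delta$ is by minimality of $\Delta^*$ not algebraically closed, so Section~1 constructs $\phi_{\Delta+1}=[\phi_\Delta^n]$ as an algebraic extension of $\phi_\Delta$ of some finite degree $n$; combined with the inductive hypothesis that $\phi_\Delta$ is algebraic over $\FF_p$, this makes $\phi_{\Delta+1}$ algebraic over $\FF_p$ as well. For a limit step $\Delta\le\Delta^*$, the field $\phi_\Delta=\sup_i\phi_{\Delta[i]}$ is the directed union of its fundamental sequence, each term of which is algebraic over $\FF_p$ by induction; a directed union of algebraic extensions is algebraic, so $\phi_\Delta$ is algebraic over $\FF_p$.

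\noindent\textbf{Main obstacle.} There is little substantive difficulty. The one structural input from Section~1 that must be flagged is that the transcendental successor construction $\widetilde{\phi}=[\phi^\phi]$ is invoked only when $\phi$ is already algebraically closed, so no transcendental element can be introduced at any stage strictly below $\Delta^*$. Once this is pointed out, the rest is simply the standard fact that algebraicity is preserved under successor algebraic extensions and under directed unions of fields, followed by the observation that every element of the algebraic closure $\phi_{\Delta^*}=\overline{\FF_p}$ generates a finite subfield over $\FF_p$.
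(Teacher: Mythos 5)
Your proof is correct, but it runs the induction in a different place than the paper does. You induct on the stage $\Delta$ of the construction, showing that every field $\phi_\Delta$ up to the first algebraically closed one $\phi_{\Delta^*}$ is algebraic over $\FF_p$ (successor steps are finite algebraic extensions because the transcendental construction $[\phi^\phi]$ is only invoked at an algebraically closed field; limit steps are directed unions), and then you conclude for an individual element $\alpha<\phi_{\Delta^*}$ via the standard fact that $\FF_p(\alpha)$ is finite when $\alpha$ is algebraic over the prime field. The paper instead argues by transfinite induction on the element $\alpha$ itself: it takes the smallest field $\chi>\alpha$, notes that $\chi$ is a finite-degree extension of the preceding field $\chi'$, applies the inductive hypothesis to the coefficients of the minimal polynomial of $\alpha$ over $\chi'$ (which are smaller ordinals) to place them in a common finite field $F$, and then exhibits $F(\alpha)$ as the desired finite field. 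Both arguments rest on the same structural input from Section~1 --- below the first transcendental every successor field is a finite algebraic extension --- but your global "the whole tower up to $\Delta^*$ is algebraic over $\FF_p$" statement is a slightly stronger intermediate claim that makes the per-element conclusion immediate, whereas the paper's element-wise induction produces an explicit finite subfield $F(\alpha)$ without ever asserting algebraicity of the entire field $\phi_{\Delta^*}$ at once; the two are equally valid, and both share the paper's implicit identification of the fields (and hence the first transcendental) with the ordinals $\phi_\Delta$ produced by the construction.
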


True by induction: given an element $\alpha$ below the first transcendental, let $\chi$ be the smallest ordinal where $\chi$ is a field and $\chi>\alpha$. If $\chi$ is finite, then $\chi$ is the finite field we want; so assume $\chi$ is infinite. From the work in Section 1, $\chi$ is an algebraic extension of a smaller field $\chi'$, so $\alpha$ is a root of some polynomial $f(x)\in\chi'[x]$ irreducible over $\chi'$. By induction, each coefficient of $f(x)$ is contained in a finite field, so there is a finite field $F$ containing all the coefficients of $f(x)$. If the degree of $f(x)$ is $n$, then $F(\alpha)$ will be a field in $\textbf{On}_p$ containing $\alpha$, and it will have order $|F|^n$, which is finite. So $F(\alpha)$ is the field we want.

\begin{lemma}
Let $\chi\in \textbf{On}_p$ be a field below the first transcendental. Then for any prime $u$, the following are equivalent:

1. For any $f(x)\in\chi[x]$ of degree $u$ such that $f(x)$ is irreducible over the field generated by its coefficients, $\chi$ contains the roots of $f(x)$.

2. $\chi$ contains finite fields of order $[p^{u^n}]$ for all $n\in\omega$.
\end{lemma}

\begin{proof}
$1\Rightarrow 2$: This can be proven by induction on $n$. Since $\chi$ is a field of characteristic $p$, $\chi$ obviously contains a field of order $[p^{u^0}]=p$. Assume that $F\subseteq\chi$ is a field of order $[p^{u^k}]$. Let $f(x)\in F[x]$ be a polynomial of degree $u$ that is irreducible over $F$; this polynomial has a root $\alpha\in\chi$, but then $F(\alpha)\subseteq\chi$ is a field of order $[p^{u^{k+1}}]$.

$2\Rightarrow 1$: Let $f(x)\in\chi[x]$ be a polynomial of degree $u$ that is irreducible over $F\subseteq\chi$, the finite field generated by the coefficients of $f(x)$. Say $|F|=[p^{mu^n}]$, where $m$ is not a multiple of $u$; then all of the roots of $f(x)$ are in the subfield of $\textbf{On}_p$ of order $[p^{mu^{n+1}}]$. But $\chi$ contains both a field of order $[p^{mu^n}]$ (namely, $F$) and a field of order $[p^{u^{n+1}}]$ (by assumption), so $\chi$ must contain a field of order $[p^{mu^{n+1}}]$. So $\chi$ contains the roots of $f(x)$.
\end{proof}

\begin{lemma}
Let $\chi\in \textbf{On}_p$ be a field below the first transcendental. Let $u_1,u_2,\ldots,u_m$ be primes. Then the following are equivalent:

1. For every $i$, the following holds: for any $f(x)\in\chi[x]$ of degree $u_i$ that is irreducible over the field generated by its coefficients, $\chi$ contains the roots of $f(x)$.

2. If all the prime factors of $n\in\NN$ are among the $u_i$, then the following holds: for any $f(x)\in\chi[x]$ of degree $n$ that is irreducible over the field generated by its coefficients, $\chi$ contains the roots of $f(x)$.
\end{lemma}

\begin{proof}
$2\Rightarrow 1$: trivial, as we can just take $n=u_i$ for each $u_i$ in turn.

$1\Rightarrow 2$: let $f(x)\in\chi[x]$ be a polynomial of degree $n$ that is irreducible over $F\subseteq\chi$, the finite field generated by its coefficients. Then all the roots of $f(x)$ are in the finite field of order $|F|^n$. But this field can be built up from $F$ using extensions of prime degree (i.e. of degree $u_i$ for some $i$), and by assumption, the fields resulting from each extension will be contained in $\chi$. So the roots of $f(x)$ will be in $\chi$.
\end{proof}

Essentially, this proves parts 1 through 3 of Theorem \ref{closureofp}. Below the first transcendental, the first fields will define quadratic extensions of themselves. If $\phi$ is a field, and the next field $\widetilde{\phi}$ is a quadratic extension, then we have seen that $\widetilde{\phi}=[\phi^2]$. So the first fields are $p$, $[p^2]$, $[p^4]$, and so on. The supremum of these fields ($[p^\omega]=\omega$) is quadratically closed, since it contains fields of order $[p^{2^n}]$ for all $n$. The subsequent fields $[\omega^3]$, $[\omega^9]$, etc. will each be cubic extensions of the previous field; there will be no need for further quadratic extensions, since these fields will still contain fields of order $[p^{2^n}]$ for all $n$. Once we have a cubically closed field, then come the quintic extensions (with no need for more cubic extensions), and so on. And the supremum of all these fields is the first transcendental, $[\omega^{\omega^\omega}]$.

To prove the next parts of Theorem \ref{closureofp}, we will introduce some notation from \cite{Lenstra2}. For $\alpha\in[\omega^{\omega^\omega}]$, we will let $d(\alpha)$ be the degree of the minimal polynomial of $\alpha$ over the field $p$. (So, the smallest field containing $\alpha$ has order $[p^{d(\alpha)}]$.) Also, if $\alpha\ne 0$, we will let $\text{ord}(\alpha)$ be the multiplicative order of $\alpha$: i.e. the smallest $n\in\NN$ where $\alpha^n=1$.

\begin{lemma}
For every nonzero $\alpha\in[\omega^{\omega^\omega}]$, $d(\alpha)$ is the smallest $n\in\NN$ where $\text{ord}(\alpha)$ divides $[p^n-1]$.
\end{lemma}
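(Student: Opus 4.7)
The plan is to unpack the two definitions $d(\alpha)$ and $ord(\alpha)$ via the structure of the unique finite subfields of $[\omega^{\omega^\omega}]$. By the earlier lemma every nonzero $\alpha$ lies in some finite subfield $F \subseteq [\omega^{\omega^\omega}]$, and the smallest such $F$ is the field of order $[p^{d(\alpha)}]$ by definition of $d$. The two classical facts I will invoke are: the multiplicative group of the field of order $[p^m]$ is cyclic of order $[p^m-1]$, and an element $\alpha$ lies in the field of order $[p^m]$ if and only if $\alpha^{[p^m]}=\alpha$, i.e.\ $\alpha$ is a root of $x^{[p^m]}-x$.

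I would first establish the easy direction: $ord(\alpha)$ divides $[p^{d(\alpha)}-1]$. Since $\alpha$ is a nonzero element of the field of order $[p^{d(\alpha)}]$, it belongs to that field's multiplicative group, which has order $[p^{d(\alpha)}-1]$; by Lagrange the order of $\alpha$ divides $[p^{d(\alpha)}-1]$. This shows $d(\alpha)$ lies in the set of $m$ for which $ord(\alpha)$ divides $[p^m-1]$.

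For the harder direction, suppose for contradiction that there is some $m < d(\alpha)$ with $ord(\alpha)$ dividing $[p^m-1]$. Then $\alpha^{[p^m-1]}=1$, so $\alpha^{[p^m]}=\alpha$, so $\alpha$ is a root of $x^{[p^m]}-x$. But the roots of this polynomial are exactly the elements of the field of order $[p^m]$, which lives inside $[\omega^{\omega^\omega}]$ (by the development of parts 1--3 of Theorem~\ref{closureofp}). Hence $\alpha$ lies in a field strictly smaller than the field of order $[p^{d(\alpha)}]$, contradicting the minimality in the definition of $d(\alpha)$.

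There is no serious obstacle here; the only thing to be a little careful about is making sure we are entitled to use the standard finite-field facts inside $On_p$. This is fine because the subfields of $[\omega^{\omega^\omega}]$ are genuine fields in the ordinary sense (they are ordinary fields under the operations of $On_p$), and the argument about roots of $x^{[p^m]}-x$ is purely a statement about the polynomial identity in any field of characteristic $p$. The whole proof is therefore just the two short paragraphs above.
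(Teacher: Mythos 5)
Your proof is correct and follows essentially the same route as the paper: both directions rest on the identification of the subfield of order $[p^{d(\alpha)}]$ (resp.\ $[p^m]$) with the roots of $x^{[p^{d(\alpha)}]}-x$ (resp.\ $x^{[p^m]}-x$), exactly as in the paper's argument, with your Lagrange step being just a rephrasing of the paper's observation that $\alpha$ is a root of $x^{[p^n-1]}-1$.
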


\begin{proof}
Let $n=d(\alpha)$. Then $\alpha$ is contained in a field of order $[p^n]$, but not contained in a field of order $[p^m]$ for any $m<n$. So $\alpha$ is a root in $\textbf{On}_p$ of $x^{[p^n]}-x$ (hence of $x^{[p^n-1]}-1$), but not of $x^{[p^m]}-x$ (nor of $x^{[p^m-1]}-1$) for any $m<n$. So the multiplicative order of $\alpha$ is a factor of $[p^n-1]$, but not of $[p^m-1]$ for any $m<n$.
\end{proof}

From here on, we will say that $r\in\NN$ is a ``primitive divisor'' of $[p^n-1]$ (for $n\in\NN$) if $r$ divides $[p^n-1]$, but $r$ does not divide $[p^m-1]$ for any $m\in\NN$, $m<n$. (Any factor of $[p^1-1]$ is automatically a primitive divisor of $[p^1-1]$.) Thus, we have proven that for every nonzero $\alpha\in[\omega^{\omega^\omega}]$, $\text{ord}(\alpha)$ is a primitive divisor of $[p^{d(\alpha)}-1]$.

Also, if $u,m,n\in \NN$, we will write $[u^m] \ \| \ n$ if $[u^m]$ divides $n$, but $[u^{m+1}]$ does not divide $n$. (If $u$ does not divide $n$, then we will write $[u^0] \ \| \ n$.)

\begin{lemma}
Say $F\subseteq \textbf{On}_p$ is a field of order $[p^n]$, $\alpha\in F$, and $u$ is a prime. Assume $[u^m] \ \| \ [p^n-1]$. Then $\alpha$ is a $u$th power in $F$ (i.e. there exists a $\beta\in F$ where $\beta^u=\alpha$) if and only if one of the following occurs:

1. $m=0$ (so $[p^n-1]$ is not a multiple of $u$)

2. $[u^m]$ does not divide $\text{ord}(\alpha)$.
\end{lemma}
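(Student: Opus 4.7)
The plan is to reduce to the standard theory of cyclic unit groups in finite fields. By the first lemma of this section, the field $F$, being an element of $[\omega^{\omega^\omega}]$, is actually a finite field, so $F^\ast = F \setminus \{0\}$ is cyclic of order $[p^n-1]$. The case $\alpha = 0$ is immediate (take $\beta = 0$), so from now on I assume $\alpha \in F^\ast$. When $m = 0$ we have $\gcd(u,[p^n-1]) = 1$, and the $u$th power map $x \mapsto x^u$ on the cyclic group $F^\ast$ has trivial kernel, hence is a bijection; every element of $F$ is therefore a $u$th power, so condition 1 suffices.

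For $m \geq 1$, I would fix a generator $g$ of $F^\ast$ and write $\alpha = g^j$ with $0 \leq j < [p^n-1]$. Writing $\beta = g^\ell$, the equation $\beta^u = \alpha$ becomes the congruence $u\ell \equiv j \pmod{[p^n-1]}$, which is solvable in $\ell$ if and only if $\gcd(u,[p^n-1]) \mid j$. Since $m \geq 1$, we have $u \mid [p^n-1]$ and so $\gcd(u,[p^n-1]) = u$; thus $\alpha$ is a $u$th power in $F$ precisely when $u \mid j$. It remains to show that this is equivalent to condition 2, namely $[u^m] \nmid ord(\alpha)$.

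To finish, let $d = \gcd(j,[p^n-1])$, so that $ord(\alpha) = [p^n-1]/d$, write $[p^n-1] = [u^m] c$ with $\gcd(u,c) = 1$, and let $[u^s]$ be the exact power of $u$ dividing $d$ (so $0 \leq s \leq m$). A direct computation shows that $[u^{m-s}]$ is the exact power of $u$ dividing $ord(\alpha)$, so $[u^m] \mid ord(\alpha)$ if and only if $s = 0$, i.e., if and only if $u \nmid d$. Because $u \mid [p^n-1]$, we further have $u \mid d$ if and only if $u \mid j$, which completes the chain of equivalences. There is no real obstacle beyond careful bookkeeping on $u$-adic valuations; the only place the hypothesis $m \geq 1$ is actually used is in extracting $u$ from the gcd, which is exactly why the $m = 0$ case must be treated separately.
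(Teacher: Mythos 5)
Your proof is correct, and it takes a somewhat different route from the paper's. You fix a generator of the cyclic group $F^{*}$, translate $\beta^{u}=\alpha$ into the linear congruence $u\ell\equiv j \pmod{[p^n-1]}$, and finish by comparing the exact power of $u$ dividing $\gcd(j,[p^n-1])$ with the exact power dividing $ord(\alpha)$. The paper never chooses a generator: for $m=0$ it exhibits the root directly as $\beta=\alpha^{k}$ with $[ku]\equiv 1 \pmod{[p^n-1]}$ (essentially your bijectivity observation), while for $m>0$ it writes $ord(\alpha)=[u^{k}s]$ with $u$ not dividing $s$, takes a $u$th root $\beta$ of $\alpha$ in the ambient Field $On_p$, and decides whether $\beta\in F$ by checking whether $ord(\beta)$ divides $[p^n-1]$: if $k<m$ then $ord(\beta)$ divides $[u^{k+1}s]$, which divides $[p^n-1]$, so $\beta\in F$; if $k=m$ then $[u^{m+1}]$ divides $ord(\beta)$, so no $u$th root can lie in $F$. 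Your discrete-logarithm computation is the classical textbook argument and stays entirely inside $F$; the paper's version trades the generator for the freedom to take roots in the surrounding Field and membership tests via orders, which shortens the valuation bookkeeping. Two small remarks: your separate treatment of $\alpha=0$ is harmless (the paper tacitly restricts $ord$ to nonzero elements), and the appeal to the first lemma of the section is unnecessary, since a field of order $[p^n]$ is finite by definition, so cyclicity of $F^{*}$ is immediate.
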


\begin{proof}
If $m=0$, then there exists $k\in \NN$ such that $[ku]\equiv 1$ (mod $[p^n-1]$). Let $\beta=\alpha^k\in F$; then $\beta^u=\alpha^{[ku]}=\alpha$.

If $m>0$, then assume $\text{ord}(\alpha)=[u^ks]$, where $u$ is not a factor of $s$. Then $k\le m$, since $\text{ord}(\alpha)$ divides $[p^n-1]$, and $u^m$ is the largest power of $u$ dividing $[p^n-1]$. Let $\beta\in \textbf{On}_p$ be a $u$th root of $\alpha$; then $\text{ord}(\beta) \ | \ [u^{k+1}s]$. (We'll actually have $\text{ord}(\beta)=[u^{k+1}s]$ if $k\ge 1$; if $k=0$, $\text{ord}(\beta)$ may either be $s$ or $us$.) If $k<m$ (so that $[u^m]$ does not divide $\text{ord}(\alpha)$), then $\text{ord}(\beta) \ | \ [p^n-1]$, so $\beta\in F$. If $k=m$, then since $[u^m]$ divides $\text{ord}(\alpha)$, $[u^{m+1}]$ must divide $\text{ord}(\beta)$. So $\text{ord}(\beta)$ does not divide $[p^n-1]$, so $\beta\not\in F$.
\end{proof}

This shows that, below the first transcendental, no $p$th degree extensions in $\textbf{On}_p$ will be by $p$th roots. The reason: if $\alpha\in[\omega^{\omega^\omega}]$ is contained in a field of order $[p^n]$, then $\alpha$ is already a $p$th power in that finite field, since $[p^n-1]$ is not a multiple of $p$. But as we will see, if $u$ is a prime other than $p$, then all extensions of degree $u$ below the first trascendental will be by $u$th roots.

At this point, we will need a number-theoretic lemma before proving parts 4 through 6 of Theorem \ref{closureofp}.

\begin{lemma}
If $u$ is a prime, and $s$, $n$ are natural numbers ($s\ge 2$) such that $u \ | \ [s-1]$, then with one exception, $u^a \ \| \ [s^n-1]$ iff $u^a \ \| \ [n(s-1)]$. The exception: if $u=2$, $n$ is even, and $s\equiv 3$ (mod 4), then $u^a \ \| \ [s^n-1]$ iff $u^a \ \| \ [n(s+1)]$.
\end{lemma}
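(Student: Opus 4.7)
The plan is to prove this as an instance of the classical lifting-the-exponent lemma, using the factorization $s^n-1=(s-1)(s^{n-1}+s^{n-2}+\cdots+1)$ applied iteratively. The first reduction is to write $n=u^k m$ with $\gcd(m,u)=1$. Since $s\equiv 1\pmod{u}$, the cofactor satisfies $s^{m-1}+\cdots+1\equiv m\pmod{u}$ and so is coprime to $u$, giving $v_u(s^m-1)=v_u(s-1)$. Replacing $s$ by $s^m$, the problem reduces to computing $v_u(s^{u^k}-1)$ when $s\equiv 1\pmod{u}$; note that $s\equiv 3\pmod 4$ is preserved under odd powers, so the exceptional hypothesis transfers correctly.

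For the inductive step I would write $s^{u^{k+1}}-1=(s^{u^k}-1)\cdot T_k$ with $T_k=1+s^{u^k}+s^{2u^k}+\cdots+s^{(u-1)u^k}$, so everything reduces to showing $v_u(T_k)=1$. When $u$ is odd, set $s^{u^k}=1+uc$ and expand binomially: $s^{iu^k}\equiv 1+iuc\pmod{u^2}$, so $T_k\equiv u+u^2 c\cdot\frac{u-1}{2}\equiv u\pmod{u^2}$, using that $(u-1)/2$ is an integer. Iterating, $v_u(s^{u^k}-1)=v_u(s-1)+k$, and combining with the first reduction yields $v_u(s^n-1)=v_u(s-1)+v_u(n)=v_u(n(s-1))$ for all odd primes $u$.

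For $u=2$, I would use the telescoping identity $s^{2^k}-1=(s-1)(s+1)(s^2+1)(s^4+1)\cdots(s^{2^{k-1}}+1)$. For $j\ge 1$, the factor $s^{2^j}+1$ is one more than an odd square, hence $\equiv 2\pmod{8}$, contributing exactly one factor of $2$. If $s\equiv 1\pmod 4$, then $v_2(s+1)=1$ and $v_2(s-1)\ge 2$, and the total is $v_2(s-1)+k=v_2(n(s-1))$. If $s\equiv 3\pmod 4$ and $k\ge 1$ (equivalently $n$ is even), then $v_2(s-1)=1$ and $v_2(s+1)\ge 2$, and the total becomes $v_2(s+1)+k=v_2(n(s+1))$, which is the exceptional formula. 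If $s\equiv 3\pmod 4$ and $n$ is odd, then $k=0$ and $v_2(s^n-1)=v_2(s-1)=v_2(n(s-1))$ since $v_2(n)=0$.

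The main obstacle is the $u=2$ case: the binomial-theorem argument that works cleanly for odd $u$ breaks down because $(u-1)/2$ is not an integer, and this precise failure is what generates the exceptional formula involving $s+1$ in place of $s-1$. Routing around this via the telescoping factorization turns the whole analysis into a careful bookkeeping of $v_2$ on each factor, and isolates the dichotomy between $s\equiv 1$ and $s\equiv 3$ modulo $4$ in a transparent way.
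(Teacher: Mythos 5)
Your argument is essentially correct, but there is nothing in the paper to compare it against: the paper does not prove this lemma at all, it simply quotes it as Lemma 2.3 of the author's other paper \cite{DiMuro} (listed as pending). So your self-contained proof -- the classical ``lifting the exponent'' computation, done by reducing $n=u^km$ with $\gcd(m,u)=1$, handling odd $u$ by the binomial expansion of the cyclotomic-type factor $T_k$, and handling $u=2$ by the telescoping factorization $s^{2^k}-1=(s-1)(s+1)(s^2+1)\cdots(s^{2^{k-1}}+1)$ -- supplies exactly what the paper outsources, and the case analysis matches the stated exception (for $s\equiv 3\pmod 4$ and $n$ even the $s-1$ and $s+1$ roles swap, since $v_2(s-1)=1$ there). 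One small step deserves to be made explicit: after you rename $s^m$ as $s$, the exceptional conclusion must still be expressed in terms of the \emph{original} $s$, so you need $v_2(s^m+1)=v_2(s+1)$ for odd $m$, not merely that $s^m\equiv 3\pmod 4$ (which only gives $v_2(s^m+1)\ge 2$). This is true and proved by the same device as your first reduction: $s^m+1=(s+1)\bigl(s^{m-1}-s^{m-2}+\cdots-s+1\bigr)$, and the second factor is a sum of $m$ odd terms with alternating signs, hence odd. With that one line added, the proof is complete, and it is arguably a more transparent treatment of the $u=2$ anomaly than a bare citation.
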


Among other things, this means that if $u$ divides $[s-1]$ but does not divide $n$, then the $u$-part of $[s-1]$ equals the $u$-part of $[s^n-1]$.

\begin{proof}
We have $[s^n-1]=[(s-1)(s^{n-1}+s^{n-2}+\cdots+s+1)]$. Since $s\equiv 1$ (mod $u$), we have $[s^{n-1}+s^{n-2}+\cdots+s+1]\equiv n$ (mod $u$). So if $n$ is not a multiple of $u$, then neither will
$[s^{n-1}+s^{n-2}+\cdots+s+1]$ be, and the $u$-part of $[s^n-1]$ will equal the $u$-part of $[s-1]$.

We will now show that, as long as $u>2$, the $u$-part of $[s^u-1]$ equals the $u$-part of $[u(s-1)]$. That will suffice to prove the theorem, except for the case where $u=2$; by repeatedly pulling off factors of $u$, we find that the $u$-part of $[s^{u^k}-1]$ equals the $u$-part of $[u^k(s-1)]$ for all positive integers $u$. Thus, if $n=mu^k$ (where $u$ does not divide $m$), then the $u$-part of $[s^n-1]$ equals the $u$-part of $[u^k(s^m-1)]$, which equals the $u$-part of $[u^k(s-1)]$, which equals the $u$-part of $[n(s-1)]$.

We have $[s^{u-1}+s^{u-2}+\cdots+s+1]=[u+(s-1)[s^{u-2}+2s^{u-3}+\cdots+(u-2)s+(u-1)]]$. Since $s\equiv 1$ (mod $u$), we have $[s^{u-2}+2s^{u-3}+\cdots+(u-2)s+(u-1)]\equiv[1+2+\cdots+(u-1)]=[\frac{u(u-1)}{2}]\equiv 0$ (mod $u$). Thus, $[s^{u-1}+s^{u-2}+\cdots+s+1]$ is of the form $[u+u^2m]$ for some positive integer $m$; the $u$-part of $[s^{u-1}+s^{u-2}+\cdots+s+1]$ must be exactly $u$. Thus, the $u$-part of $[s^u-1]=[(s-1)(s^{u-1}+s^{u-2}+\cdots+s+1)]$ must equal the $u$-part of $[u(s-1)]$.

What remains is the case where $u=2$. In that case, the $2$-part of $[s^2-1]=[(s-1)(s+1)]$ equals the $2$-part of $[2(s-1)]$ if $s\equiv 1$ (mod 4), but it equals the $2$-part of $[2(s+1)]$ if $s\equiv 3$ mod 4. Since $[s^m]\equiv 1$ (mod 4) whenever $m$ is even, we can conclude that the $2$-part of $[s^4-1]$ equals the $2$-part of $[2(s^2-1)]$, the $2$-part of $[s^8-1]$ equals the $2$-part of $[4(s^2-1)]$, and so on. In general, the $2$-part of $[s^n-1]$ (for $n$ even) always equals the $2$-part of $[\frac{n}{2}(s^2-1)]$, which equals the $2$-part of $[n(s-1)]$ (for $s\equiv 1$ (mod 4)) or the $2$-part of $[n(s+1)]$ (for $s\equiv 3$ (mod 4)).
\end{proof}

We can now prove part 4 of Theorem \ref{closureofp}.

\begin{lemma}
For each prime $u\ne p$, there exists an element in $\chi_u$ that has no $u$th root in $\chi_u$. Thus, if $\alpha_u$ is the smallest such element, then $\chi_u^u=\alpha_u$.
\end{lemma}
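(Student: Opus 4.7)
The plan is to attack the lemma in two stages: first exhibit an element of $\chi_u$ with no $u$th root in $\chi_u$, then identify the least such element with $\chi_u^u$ via the construction of Section~1. Dispose of $u = 2$ immediately: $\chi_2 = p = \FF_p$ is a finite prime field, and since $p$ is odd, $\FF_p^{*}$ has even order and contains non-squares. For an odd prime $u \neq p$, let $d$ be the multiplicative order of $p$ modulo $u$. Since $d \mid u - 1$, every prime factor of $d$ is strictly less than $u$, so the equivalences established earlier in this section place $F_{[p^d]}$ inside $\chi_u$; by definition $u \mid [p^d - 1]$. Let $v \geq 1$ be the integer with $[u^v] || [p^d - 1]$, and pick $\alpha \in F_{[p^d]}^{*}$ of multiplicative order exactly $[u^v]$.

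I claim $\alpha$ has no $u$th root in $\chi_u$. A putative root $\beta$ would have order $[u^{v+1}]$ (since $\beta^{[u^{v+1}]} = \alpha^{[u^v]} = 1$ while $\beta^{[u^v]} = \alpha^{[u^{v-1}]} \neq 1$), and so would lie in some $F_{[p^m]} \subseteq \chi_u$ with $[u^{v+1}] \mid [p^m - 1]$ and $m$ having only prime factors strictly less than $u$. Since $u \mid [p^m - 1]$ forces $d \mid m$, write $m = dk$; every prime factor of $k$ is then $< u$, so in particular $u \nmid k$. The lifting-the-exponent lemma just cited, applied with $s = [p^d]$ and noting $u \mid s - 1$, then yields $[u^v] || [p^m - 1]$, contradicting $[u^{v+1}] \mid [p^m - 1]$. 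The exception case of that lemma arises only for $u = 2$, already handled.

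For the identification $\chi_u^u = \alpha_u$, the earlier structure lemmas show that $\chi_u$ is closed under all extensions of degree strictly less than $u$ but lacks a subfield of order $[p^u]$, so the least degree of an irreducible polynomial in $\chi_u[x]$ is exactly $u$. The Section~1 construction thus adjoins to $\chi_u$ a root of the lex-earliest $g(x) = x^u - h(x)$ having no root in $\chi_u$. Because $u \mid [p^d - 1]$, $\chi_u$ contains all $u$th roots of unity, so $x^u - c$ has no root in $\chi_u$ iff $c$ is not a $u$th power there. Any $h$ with a nonzero coefficient on a positive power of $x$ is lex-later than a pure constant, so the lex-earliest valid $h$ is the smallest constant in $\chi_u$ that is not a $u$th power, namely $\alpha_u$; the construction then gives $\chi_u^u = h(\chi_u) = \alpha_u$. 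The only real technical hurdle is the lifting-the-exponent computation pinning down $[u^v] || [p^m - 1]$ across all admissible $m$; the remaining steps are bookkeeping from the previously established finite-subfield structure of $\chi_u$ together with a routine lex-ordering observation on $g(x)$.
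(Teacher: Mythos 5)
Your proof is correct and follows essentially the same route as the paper: you pick an element of maximal $u$-power order $[u^v]$ in the subfield of order $[p^{f(u)}]\subseteq\chi_u$ and use the cited number-theoretic (lifting-the-exponent) lemma, together with the fact that all finite subfields of $\chi_u$ have order $[p^m]$ with the prime factors of $m$ less than $u$, to rule out any element of order $[u^{v+1}]$, hence any $u$th root. Your explicit handling of $u=2$ and the lexicographic argument identifying the adjoined polynomial as $x^u-\alpha_u$ are fine refinements of details the paper treats implicitly, not a different method.
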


\begin{proof} Assume that $u$ is a primitive divisor of $[p^m-1]$; assume $[u^n] \ \| \ [p^m-1]$. We have $m\le u-1$, since $u$ must divide $[p^{u-1}-1]$. Thus, all prime factors of $m$ are less than $u$. Since $\chi_u$ is closed under extensions of degree less than $u$, $\chi_u$ must contain a finite field of order $[p^m]$, and thus must contain an element of multiplicative order $[u^n]$.

However, $\chi_u$ cannot contain any elements of multiplicative order $[u^{n+1}]$, for the following reason: for any $\alpha\in\chi_u$, let $F\subseteq\chi_u$ be a finite field containing $\alpha$. Then $|F|=[p^s]$, where all prime factors of $s$ are less than $u$. Then $u \ | \ [p^s-1]$ if and only if $s$ is a multiple of $m$; if that is true, then since $u$ does not divide $s$, the $u$-part of $[p^s-1]$ is the
same as the $u$-part of $[p^m-1]$ (namely, $[u^n]$). It is thus impossible for $[u^{n+1}]$ to divide $[p^s-1]$, so no element of $F$ (including $\alpha$) can have multiplicative order $[u^{n+1}]$.

Thus, if $\alpha\in\chi_u$ has multiplicative order $[u^n]$ (and there must be such an element in $\chi_u$), then $\alpha$ has no $u$th root in $\chi_u$; any $u$th root of $\alpha$ would have multiplicative order $[u^{n+1}]$, and thus cannot be in $\chi_u$.
\end{proof}

Now to prove parts 5 and 6 of Theorem \ref{closureofp}.

\begin{lemma}
We have $(\chi_{[u^{n+1}]})^u=\chi_{[u^n]}$ whenever $p\ne u$ and $n\ge 1$, with one exception: we have $(\chi_4)^2=\chi_2+1$ when $p\equiv 3$ (mod $4$).
\end{lemma}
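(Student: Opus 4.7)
The plan is to reduce the statement to a computation of the smallest non-$u$th-power in $\chi_{[u^{n+1}]}$. By the successor-field construction in Section 1 (applied one step up, from $\chi_{[u^{n+1}]}$ to $\chi_{[u^{n+2}]}$), we have $(\chi_{[u^{n+1}]})^u = h(\chi_{[u^{n+1}]})$, where $x^u - h(x)$ is the lex-earliest polynomial in $\chi_{[u^{n+1}]}[x]$ irreducible over $\chi_{[u^{n+1}]}$. For prime $u$, $x^u - c$ is irreducible if and only if $c$ is not a $u$th power (the $4|u$ subcase of the classical criterion does not arise for prime $u$), and since the lex-ordering prefers low coefficients on higher powers of $x$, the lex-earliest such $h$ is the smallest constant $c \in \chi_{[u^{n+1}]}$ having no $u$th root in $\chi_{[u^{n+1}]}$. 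Our task thus reduces to identifying this smallest non-$u$th-power.

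Let $m_0$ be the multiplicative order of $p$ modulo $u$ (so $m_0 | u-1$ and every prime factor of $m_0$ is less than $u$), and let $c$ be determined by $[u^c]||[p^{m_0}-1]$. By the preceding lemma on $u$th powers in finite fields, a nonzero $\alpha \in \chi_{[u^{n+1}]}$ has no $u$th root in $\chi_{[u^{n+1}]}$ if and only if the exponent $a$ with $[u^a]||ord(\alpha)$ equals the maximum $M$ attained over elements of finite subfields $\FF_{[p^s]} \subseteq \chi_{[u^{n+1}]}$. The allowed $s$ are precisely those whose prime factors are at most $u$ and whose $u$-part is at most $[u^n]$; the number-theoretic lemma from \cite{DiMuro} then yields $M = c + n$ in the generic case.

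I would prove the main case by induction on $n$. For the base $n = 1$: starting from $(\chi_u)^u = \alpha_u$ (part 4) and the fact that $\alpha_u$, as a non-$u$th-power of $\chi_u$, satisfies $[u^c]||ord(\alpha_u)$, we obtain $[u^{c+1}]||ord(\chi_u)$, so $\chi_u$ attains the maximal $u$-part of $ord$ in $\chi_{[u^2]}$; every ordinal below $\chi_u$ lies in $\chi_u$ and has $u$-part of $ord$ at most $c < c+1 = M$, hence is a $u$th power in $\chi_{[u^2]}$. For $n \ge 2$ the inductive identity $(\chi_{[u^k]})^u = \chi_{[u^{k-1}]}$ for $2 \le k \le n$ telescopes to $(\chi_{[u^n]})^{[u^n]} = \alpha_u$, yielding $[u^{c+n}]||ord(\chi_{[u^n]})$; ordinals below $\chi_{[u^n]}$ lie in $\chi_{[u^n]}$, where the maximum is only $c + (n-1)$, so they are $u$th powers in $\chi_{[u^{n+1}]}$. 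Hence $\chi_{[u^n]}$ is the smallest non-$u$th-power in $\chi_{[u^{n+1}]}$.

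The main obstacle is the exception $[u^{n+1}] = 4$ with $p \equiv 3 \pmod 4$. Here the exceptional clause of the number-theoretic lemma gives $M = 1 + e$ where $[2^e]||[p+1]$, so $M \ge 3$ rather than the generic $c + n = 2$, and $\chi_2$ (for which only $[2^2]||ord(\chi_2)$) acquires a square root in $\chi_4$. I would identify the true smallest non-square in $\FF_{[p^2]}$ by direct check: all of $0, 1, \ldots, p-1$ are squares because $\FF_p^* \subseteq (\FF_{[p^2]}^*)^2$ for odd $p$; $\chi_2$ is a square by the bound above; and $\chi_2 + 1$ is a non-square because its norm $N(\chi_2 + 1) = (\chi_2 + 1)(1 - \chi_2) = 1 - \alpha_2 = -(\alpha_2 - 1)$ is a non-square in $\FF_p$ (as $\alpha_2 - 1$ is a square by the minimality of $\alpha_2$, and $-1$ is a non-square when $p \equiv 3 \pmod 4$). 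Finally, one verifies that the exception does not cascade: for $n \ge 2$ the telescoping chain passes through the already-proven step $(\chi_4)^2 = \chi_2 + 1$ (with $[2^{1+e}]||ord(\chi_2+1)$), so both $ord(\chi_{[2^n]})$ and the (exception-adjusted) $M$ in $\chi_{[2^{n+1}]}$ pick up the extra factor $2^e$ in parallel, and $(\chi_{[2^{n+1}]})^2 = \chi_{[2^n]}$ resumes unchanged.
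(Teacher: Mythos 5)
Your proposal is correct and follows essentially the same route as the paper: show every ordinal below $\chi_{[u^n]}$ is a $u$th power in $\chi_{[u^{n+1}]}$ while $\chi_{[u^n]}$ is not, by comparing the $u$-part of $ord(\chi_{[u^n]})$ (obtained from the telescoped relation $(\chi_{[u^n]})^{[u^n]}=\alpha_u$) with the maximal $u$-part of $[p^s-1]$ over the admissible finite subfields, via the finite-field $u$th-power criterion and the number-theoretic lemma, and then treating $u=2$, $p\equiv 3 \pmod 4$ separately with the induction resuming at $\chi_8$. The only cosmetic difference is that you rule out $\chi_2+1$ being a square in $\chi_4$ via the norm to $\FF_p$, whereas the paper expands $(a\chi_2+b)^2$ directly; both reduce to the observation that $1-\alpha_2=(-1)(\alpha_2-1)$ is a non-square in $\chi_2$.
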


\begin{proof}
Assume that we are not in the exceptional case: either $u>2$, or $u=2$ and $p\equiv 1$ (mod $4$). By induction, we will assume that $(\chi_{[u^{i+1}]})^u=\chi_{[u^i]}$ whenever $1\le i<n$, and we will prove that $(\chi_{[u^{n+1}]})^u=\chi_{[u^n]}$. Note: since $(\chi_{[u^n]})^{[u^n]}=\alpha_u$, if $\text{ord}(\alpha_u)=a$, then $\text{ord}(\chi_{[u^n]})=[au^n]$. And if $d(\alpha_u)=k$, then $d(\chi_{[u^n]})=[ku^n]$.

Consider any $\beta<\chi_{[u^n]}$; we claim that $\beta$ is a $u$th power in $\chi_{[u^{n+1}]}$. If $\beta$ is already a $u$th power in its minimal field $p(\beta)$, then we're done. Otherwise, if $|p(\beta)|=[p^b]$ (then $d(\beta)=b$), and if $[u^c] \ \| \ [p^b-1]$, then $[u^c] \ \| \ \text{ord}(\beta)$. (Note that $c>0$; if $u$ did not divide $[p^b-1]$, then $\beta$ would be a $u$th power in $p(\beta)$.) But then the finite field $p(\beta,\chi_{[u^n]})$ has dimension a multiple of $[bu]$ (say, $[bju]$). And since $[u^{c+1}] \ | \ [p^{[bju]}-1]$, but $[u^{c+1}]$ does not divide $\text{ord}(\beta)$, $\beta$ must be a $u$th power in $p(\beta,\chi_{[u^n]})$. So all ordinals less than $\chi_{[u^n]}$ are $u$th powers in $\chi_{[u^{n+1}]}$.

But $\chi_{[u^n]}$ is not a $u$th power in $\chi_{[u^{n+1}]}$, for the following reasons: if $d(\alpha_u)=k$, and if $[u^m] \ \| \ [p^k-1]$, then $[u^m] \ \| \ \text{ord}(\alpha_u)$. So, $[u^{m+n}] \ \| \ \text{ord}(\chi_{[u^n]})$. Consider any finite field $F\subseteq\chi_{[u^{n+1}]}$ containing $\chi_{[u^n]}$; it has dimension $u^nt$, where $t$ is a multiple of $k$, and all prime factors of $t$ are less than $u$. We have $[u^{m+n}] \ \| \ [p^{u^nt}-1]$; since $[u^{m+n}] \ \| \ \text{ord}(\chi_{[u^n]})$ also, $\chi_{[u^n]}$ is not a $u$th power in any finite field in $\chi_{[u^{n+1}]}$ (hence, it is not a $u$th power in $\chi_{[u^{n+1}]}$). So $\chi_{[u^n]}$ is the smallest element of $\chi_{[u^{n+1}]}$ that has no $u$th root in $\chi_{[u^{n+1}]}$; we must have $(\chi_{[u^{n+1}]})^u=\chi_{[u^n]}$.

Now assume $u=2$ and $p\equiv 3$ mod $4$. The same reasoning as before shows that if $\beta<\chi_2=p$, then $\beta$ is a square in $\chi_4=[p^2]$. But $\chi_2$ will also be a square in $\chi_4$: since $p\equiv 3$ mod $4$, we have $[2^1] \ \| \ [p-1]$. So $[2^1] \ \| \ \text{ord}(\alpha_2)$, and $[2^2] \ \| \ \text{ord}(\chi_2)$. But $[2^3] \ | \ [p^2-1]$. So the largest [power] of $2$ dividing $[p^2-1]$ does not divide the order of $\chi_2$, so $\chi_2$ is a square in the field of order $[p^2]$ (namely, $\chi_4$).

So if $p\equiv 3$ mod $4$, then $\chi_2$ is a square in $\chi_4$. If $\chi_2+1$ is a square in $\chi_4$, then for some $a,b\in p$, we have $\chi_2+1=(a\chi_2+b)^2=(2\cdot ab)\chi_2+(a^2\alpha_2+b^2)$. So we have $a^2\alpha_2+b^2=2\cdot ab$ (both sides of that equation are equal to $1$), and by manipulating that equation, we obtain $(b-a)^2=a^2(1-\alpha_2)$. Since $(b-a)^2$ and $a^2$ are squares in $\chi_2$, $1-\alpha_2$ must also be a square in $\chi_2$.

But it is not possible for $1-\alpha_2$ to be a square in $\chi_2$, for the following reasons: since $\alpha_2$ is the smallest non-square in $\chi_2$, $[\alpha_2-1]=\alpha_2-1$ is a square in $\chi_2$. And since $p\equiv 3$ (mod $4$), $-1$ is not a square in $\chi_2$. So $1-\alpha_2=(-1)(\alpha_2-1)$, the product of a non-square and a square in $\chi_2$, cannot be a square in $\chi_2$. So there is no element $a\chi_2+b\in\chi_4$ whose square is $\chi_2+1$; we must have $\chi_4^2=\chi_2+1$.

However, the inductive step works thereafter: if $[2^{m'}] \ \| \ \text{ord}(\chi_2+1)$, then $[2^{m'}] \ \| \ [p^2-1]$, so $[2^{m'+1}] \ \| \ [p^4-1]$. And since $[2^{m'+1}] \ \| \ \text{ord}(\chi_4)$, $\chi_4$ will not be a square in $\chi_8$. So $\chi_8^2=\chi_4$, and we may proceed by induction as before. So $(\chi_{[u^{n+1}]})^u=\chi_{[u^n]}$ in all cases, except that $(\chi_4)^2=\chi_2+1$ when $p\equiv 3$ mod $4$.
\end{proof}

It remains to prove part 7 of Theorem \ref{closureofp}. For simplicity, let $\chi=\chi_p$; then $\chi_{[p^n]}=[\chi^{p^{n-1}}]$ for all $n\in\NN$. As discussed, each of these fields is a degree $p$ extension of the previous field, but not by a $p$th root (since all elements of finite fields of characteristic $p$ already have $p$th roots in that finite field). So each $\chi_{[p^n]}$ will be an extension of $\chi_{[p^{n-1}]}$ by a root of a polynomial of form $x^p-x-\alpha$, assuming there is such a polynomial with no roots in $\chi_{[p^n]}$. And as we will see, there will always be such a polynomial; the smallest suitable $\alpha$ will be $\displaystyle\prod_{k=1}^{n-1}(\chi_{[p^k]})^{[p-1]}=[\chi^{p^n-1}]$.

Let $f(x)=x^p-x$; for any field $F\subseteq \textbf{On}_p$, $f$ is an additive homomorphism from $F$ to itself. Let $S$ be the set of all ordinals in $\chi$ that are [multiples] of $p$; that is, all ordinals of the form $[p\delta]$ for some ordinal $\delta$.

\begin{lemma}
The map $f(x)$ sends the ordinals in $\chi$ to exactly the ordinals in $S$.
\end{lemma}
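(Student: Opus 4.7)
The plan is to prove both inclusions $f(\chi)\subseteq S$ and $S\subseteq f(\chi)$, with the heart of the argument being the forward direction.

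\textbf{Setup.} Let $v:\chi\to\FF_p$ be the map extracting the $p^0$-coefficient of the base-$p$ expansion; by the addition theorem of Section 2 this is $\FF_p$-linear, and $S=\ker v$ is a codimension-$1$ $\FF_p$-subspace of $\chi$. By the first lemma of Section 3, every element of $\chi$ lies in a finite subfield $F=\FF_{p^n}$; on each such $F$ the kernel of $f|_F$ is $\FF_p$, giving $|f(F)|=p^{n-1}=|S\cap F|$ (the latter since $1\in F\setminus S$). So once $f(\chi)\subseteq S$ is established, size-matching forces $f(F)=S\cap F$ for every finite subfield $F$, and taking unions yields $f(\chi)=S$.

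\textbf{The inductive argument for $f(\chi)\subseteq S$.} I induct on the milestone fields $\chi_r\subseteq\chi_p$, where $r=u^n$ ranges over prime powers with $u<p$. The base $\chi_2=\FF_p$ is trivial since $f\equiv 0$ there. Limit stages are trivial too: any $\alpha\in\chi_r=\sup\chi_{r'}$ lies in some $\chi_{r'}$. For a successor step $\chi_{u^{n+1}}$ over $\phi=\chi_{u^n}$, parts 4--6 of Theorem \ref{closureofp} tell us that the minimal polynomial of $\phi$ in $\chi_{u^{n+1}}$ is $x^u-c$ for a constant $c\in\phi$. Every $\alpha\in\chi_{u^{n+1}}$ therefore has a unique expansion $\alpha=\sum_{i=0}^{u-1}\phi^i a_i$ with $a_i\in\phi$, and the characteristic-$p$ Frobenius gives $\alpha^p=\sum_{i=0}^{u-1}\phi^{ip}a_i^p$.

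\textbf{Tracking $v$ through each term.} Write $\phi=[p^m]$ with $m\ge 1$. By the MEX lemma and the formula $[\phi^\alpha]\beta=[\phi^\alpha\beta]$ from Section 2, for $0\le i<u$ the product $\phi^i a_i$ equals the standard ordinal $[p^{m\cdot i}\cdot a_i]$; for $i\ge 1$ this is a standard-ordinal multiple of $p$ (the $p^{m\cdot i}$ carries a factor of $p$ on the left), so $v(\phi^i a_i)=0$ and hence $v(\alpha)=v(a_0)$. For the Frobenius term with $i\ge 1$, write $ip=qu+r$; since $\gcd(u,p)=1$ and $1\le i<u$, we must have $r\ge 1$, and using $\phi^u=c$ we compute $\phi^{ip}a_i^p=c^q\phi^r a_i^p=[p^{m\cdot r}\cdot(c^q a_i^p)]$, again a multiple of $p$ with $v=0$. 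Combined with $v(a_0^p)=v(a_0)$ from the inductive hypothesis applied inside $\phi$, we obtain $v(\alpha^p)=v(a_0)=v(\alpha)$, so $f(\alpha)\in\ker v=S$.

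\textbf{Main obstacle.} The delicate bookkeeping has two ingredients. First, one needs every extension inside $\chi_p$ to have a \emph{constant} minimal polynomial, so that $\phi^{ip}$ reduces cleanly to $c^q\phi^r$; this is exactly the content of parts 4--6 of Theorem \ref{closureofp} and is a special feature of being inside $\chi_p$ (for the deg
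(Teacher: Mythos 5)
Your argument is correct and is essentially the paper's own proof: the forward inclusion rests on exactly the same ingredients (parts 4--6 of Theorem \ref{closureofp} giving $\phi^u=c\in\phi$ for each generator, so that any positive power of a generator times an element of $\phi$ is an ordinal multiple of $p$, together with the digit-wise addition of Theorem 2.2), and the reverse inclusion is the identical kernel-counting argument on finite subfields ($|\mathrm{im}\,f|_F|=|F|/p=|F\cap S|$). The only difference is organizational --- you run a transfinite induction over the tower $\chi_{[u^n]}$ with a Frobenius expansion and the units-digit functional $v$, where the paper instead uses additivity of $f$ to reduce at once to single groups $\chi_r^m\delta$ with $1\le m<u$ --- and the truncated final remarks do not affect completeness, since both inclusions are already established.
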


\begin{proof}
First, we'll show that only ordinals in $S$ get mapped to. Since $f$ is an additive homomorphism, we only need to show that the elements of $\chi$ that are \textbf{groups} get sent to elements of $S$.

Let $\alpha$ be an arbitrary group in $\chi$; we can assume $\alpha>1$, since $f(1)=0\in S$. Then $\alpha$ has the form $\chi_r^m\delta$, where $r=[u^n]$ is a [power] of a prime $u<p$, $m$ is a positive integer less than $u$, and $\delta\in\chi_r$ is a group.

Then $f(\alpha)=f(\chi_r^m\delta)=\chi_r^{[mp]}\delta^p-\chi_r^m\delta=\chi_r^{[au+b]}\delta^p-\chi_r^m\delta$, where $a,b$ are nonnegative integers, and $b<p$. Since $[mp]$ is not a multiple of $u$, $b>0$. We then have $\chi_r^{[au+b]}\delta^p-\chi_r^m\delta=\chi_r^b(\chi_r^u)^a\delta^p-\chi_r^m\delta=\chi_r^b(\delta')-\chi_r^m\delta$, where $\delta'=(\chi_r^u)^a\delta^p\in\chi_r$. Since $b,m>0$, both $\chi_r^b(\delta')$ and $\chi_r^m\delta$ are in $S$; thus, so is $\chi_r^b(\delta')-\chi_r^m\delta$. So all groups in $\chi$ (and hence, all elements of $\chi$) are mapped by $f$ into $S$.

It remains to show that $f$ maps $\chi$ to the entire set $S$. Choose an arbitrary $\alpha\in S$, and let $F=p(\alpha)$, the smallest finite field containing $\alpha$. Then $f$ is an additive homomorphism from $F$ to itself, and its kernel contains $p$ elements (namely, the ordinals less than $p$). So the image of $f$ on $F$ contains $[|F|/p]$ elements. But the image must contain only elements of
$F\cap S$, and there are exactly $[|F|/p]$ elements in $F\cap S$. So all elements of $F\cap S$, including $\alpha$ itself, must be mapped to by some element of $F$. So all elements of $S$ are mapped to by some element in $\chi$.
\end{proof}

This is enough to establish the following:

\begin{corollary}
$\chi$ is a root of $x^p-x-1$.
\end{corollary}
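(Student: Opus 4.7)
The plan is to identify the exact polynomial $g(x) = x^p - h(x)$ used in Section 1.1 to construct $\chi_{[p^2]} = [\chi^p]$ as an algebraic extension of $\chi$. Once we show $g(x) = x^p - x - 1$, the construction guarantees that the ordinal $\chi$ itself serves as a root of $g$, giving $\chi^p - \chi - 1 = 0$ directly. So the whole task reduces to determining the lexicographically earliest $h(x) \in \chi[x]$ of degree less than $p$ for which $x^p - h(x)$ has no root in $\chi$.

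I would walk through the lex search from the top coefficient down. The polynomials lexicographically earlier than $x+1$ are exactly the constants $h(x) = c$ for $c \in \chi$, and the single polynomial $h(x) = x$. For a constant $c$: by the first lemma of this section, $c$ lies in some finite subfield of $\chi$, and since the Frobenius is bijective on any finite field of characteristic $p$, every $c$ has a $p$-th root in $\chi$, so $x^p - c$ always has a root. For $h(x) = x$: the polynomial $x^p - x$ splits completely over $\FF_p \subseteq \chi$. So every lex-earlier candidate fails to produce a polynomial without roots.

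The crux is then showing that $g(x) = x^p - x - 1$ itself has no root in $\chi$, and this is exactly what the preceding lemma delivers: $f(x) = x^p - x$ maps $\chi$ onto the set $S$ of ordinals in $\chi$ of the form $[p\delta]$. The smallest nonzero member of $S$ is $p$, so $1 \notin S$, meaning $f$ never attains the value $1$ on $\chi$ and $g$ has no root. Hence $h(x) = x + 1$ is lex-earliest, $g(x) = x^p - x - 1$ is the polynomial defining $\chi_{[p^2]}$, and by the construction in Section 1.1 the ordinal $\chi$ is itself a root of $g$. The main obstacle is really just the bookkeeping of the lex-order search; all the genuine content is already packaged in the preceding lemma.
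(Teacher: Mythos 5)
Your proposal is correct and follows essentially the same route as the paper: rule out the lexicographically earlier candidates $x^p-c$ (Frobenius is surjective on the finite subfields exhausting $\chi$) and $x^p-x$ (roots $0,\dots,p-1$), then invoke the preceding lemma that the image of $x^p-x$ on $\chi$ is the set $S$ of [multiples] of $p$, so $x^p-x-1$ is rootless and the Section 1.1 construction makes $\chi$ itself a root of it. The only difference is that you spell out the lexicographic bookkeeping slightly more explicitly than the paper does.
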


The reason: all polynomials in $\chi[x]$ of form $x^p-\alpha$ already have roots in $\chi$. So does the polynomial $x^p-x$ (namely, all the ordinals less than $p$). But $x^p-x-1$ has no root in $\chi$; given any $\alpha\in\chi$, $f(\alpha)=\alpha^p-\alpha$ is a [multiple] of $p$, hence cannot be 1. So $\chi$ must be a root of $x^p-x-1$.

The rest of part 7 of Theorem \ref{closureofp} will be established by the following theorem:

\begin{theorem}
For each field $\chi_{[p^{n+1}]}=[\chi^{p^n}]$ (for $n\in\omega$), the image of $f$ on $[\chi^{p^n}]$ is the set $S_n=\{\sum_{k=0}^{[p^n-1]}[\chi^k]\beta_k:\beta_k\in\chi,\beta_{[p^n-1]}\in S\}$. The smallest element of $[\chi^{p^n}]$ that is not in $S_n$ is $[\chi^{p^n-1}]$; thus $[\chi^{p^n}]$ is a root of $x^p-x-[\chi^{p^n-1}]$.
\end{theorem}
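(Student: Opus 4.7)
The plan is to induct on $n$. The base case $n=0$ is provided by the preceding lemma, which gives $f(\chi)=S=S_0$ (noting $[\chi^{p^0-1}]=1$), together with the corollary, which gives $\chi$ as a root of $x^p-x-1$. For the inductive step, assume the theorem holds for $n$; write $\phi=[\chi^{p^n}]$ and $c=[\chi^{p^n-1}]$, so by hypothesis $\phi^p=\phi+c$ in $[\chi^{p^{n+1}}]=[\phi^p]$. Every $\beta\in[\chi^{p^{n+1}}]$ has a unique expression $\beta=\sum_{i=0}^{p-1}\phi^i\alpha_i$ with $\alpha_i\in\phi$.

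For the containment $f([\chi^{p^{n+1}}])\subseteq S_{n+1}$, Frobenius in characteristic $p$ together with $\phi^p=\phi+c$ gives
\[f(\beta)=\sum_{i=0}^{p-1}(\phi+c)^i\alpha_i^p-\sum_{i=0}^{p-1}\phi^i\alpha_i=\sum_{j=0}^{p-1}\phi^j\gamma_j,\]
where in particular $\gamma_{p-1}=\alpha_{p-1}^p-\alpha_{p-1}=f(\alpha_{p-1})$. Re-expressing in the $\chi$-basis of $[\chi^{p^{n+1}}]$, the top $\chi$-coefficient of $f(\beta)$ is exactly the top $\chi$-coefficient of $\gamma_{p-1}$, which by the inductive hypothesis (applied to $\gamma_{p-1}\in f(\phi)=S_n$) lies in $S$; hence $f(\beta)\in S_{n+1}$.

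For the reverse containment $f([\chi^{p^{n+1}}])\supseteq S_{n+1}$, I mimic the counting argument of the preceding lemma. Given $\alpha\in S_{n+1}$, choose a finite subfield $F\subseteq[\chi^{p^{n+1}}]$ containing both $\alpha$ and $[\chi^{p^{n+1}-1}]$; such an $F$ exists since every element below the first transcendental lies in a finite subfield. The top-coefficient map $T\colon[\chi^{p^{n+1}}]\to\chi/S$ sending $\beta\mapsto\beta_{p^{n+1}-1}+S$ is an additive homomorphism with kernel $S_{n+1}$, and $T([\chi^{p^{n+1}-1}])=1+S\ne 0$ in $\chi/S\cong\FF_p$; thus $T|_F$ is surjective and $|F\cap S_{n+1}|=|F|/p$. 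Meanwhile $f|_F$ has kernel $\FF_p$, so $|f(F)|=|F|/p$, and combined with $f(F)\subseteq F\cap S_{n+1}$ from the first containment this forces $f(F)=F\cap S_{n+1}$; in particular $\alpha\in f(F)$.

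The smallest element of $[\chi^{p^{n+1}}]$ not in $S_{n+1}$ is then $[\chi^{p^{n+1}-1}]$, obtained by taking $\beta_{p^{n+1}-1}=1$ (the smallest non-element of $S$) and all lower $\chi$-coefficients zero. To conclude that $[\chi^{p^{n+1}}]$ is a root of $x^p-x-[\chi^{p^{n+1}-1}]$, invoke Section 1: the next field $[\chi^{p^{n+2}}]=[([\chi^{p^{n+1}}])^p]$ is built as the extension of $[\chi^{p^{n+1}}]$ by a root of the lexicographically earliest monic degree-$p$ polynomial with no root in $[\chi^{p^{n+1}}]$, with $[\chi^{p^{n+1}}]$ itself serving as that root. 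Since $[\chi^{p^{n+1}}]$ is perfect (being a union of finite fields), no $x^p-b$ with $b$ constant qualifies, so the lex earliest candidate has the form $x^p-x-b$ with $b$ minimal subject to $b\notin S_{n+1}$, namely $b=[\chi^{p^{n+1}-1}]$. The main obstacle is the surjectivity step; the key trick is including $[\chi^{p^{n+1}-1}]$ in $F$ so that the top-coefficient map remains surjective upon restriction to the finite subfield.
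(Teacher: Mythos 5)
Your proposal is correct and follows essentially the same route as the paper: induction on $n$, the Frobenius computation with $\phi^p=\phi+c$ to show the top $\chi$-coefficient of $f(\beta)$ lands in $S$, and the kernel-of-size-$p$ counting argument over a finite subfield to get surjectivity onto $S_{n+1}$. Your explicit top-coefficient homomorphism $T$ (with $[\chi^{p^{n+1}-1}]\in F$) and the lexicographic argument for why the minimal rootless polynomial is $x^p-x-[\chi^{p^{n+1}-1}]$ merely spell out details the paper asserts in its proof and in the surrounding discussion.
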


For example, in $\textbf{On}_3$, we have $\chi=\chi_3=\omega$. All elements of $\chi_{27}=[\omega^9]$ have the form $[\omega^8]\beta_8+\cdots+\omega\beta_1+\beta_0$, where each $\beta_i\in \omega$; the elements that are in the image of $f$ on $\chi_{27}$ are exactly those for which $\beta_8\in S$. The smallest element of $\chi_{27}$ not of that form is $[\omega^8]$; thus, $\chi_{27}$ is a root of $x^3-x-[\omega^8]$.

\begin{proof}
We will prove this by induction. We already established the $n=0$ case, so we will assume it is true for $n-1$, and prove it is true for $n$.

For simplicity, let $\phi=[\chi^{p^{n-1}}]$, and let $\widetilde{\phi}$ be the next field, $[\chi^{p^n}]$. Let $\gamma\in\widetilde{\phi}$; we have $\displaystyle\gamma=\sum_{k=0}^{[p-1]}\phi^k\gamma_k$, where each $\gamma_k\in\phi$. Then $$f(\gamma)=\sum_{k=0}^{[p-1]}f(\phi^k\gamma_k)=\sum_{k=0}^{[p-1]}(\phi^{[kp]}\gamma_k^p-\phi^k\gamma_k)$$ $$=\sum_{k=0}^{[p-1]}([\phi+\chi^{p^{n-1}-1}]^k\gamma_k^p-\phi^k\gamma_k)=\phi^{[p-1]}(\gamma_{[p-1]}^p-\gamma_{[p-1]})+\delta,$$ where $\delta\in\phi^{[p-1]}$. By the inductive hypothesis,
$$(\gamma_{[p-1]}^p-\gamma_{[p-1]})=f(\gamma_{[p-1]})=\sum_{k=0}^{[p^{n-1}-1]}[\chi^k]\beta_k,$$ where
$\beta_k\in\chi,\beta_{[p^{n-1}-1]}\in S$. Thus, $f(\gamma)$ is equal to $[\chi^{p^n-1}]\beta_{[p^{n-1}-1]}$ plus a sum of ``lesser terms''; $f(\gamma)\in S_n$.

To show that the image of $f$ on $[\chi^{p^n}]$ includes the entire set $S_n$: let $\gamma\in S_n$, and let
$\displaystyle\gamma=\sum_{k=0}^{[p^n-1]}[\chi^k]\gamma_k$, where each $\gamma_k\in\chi$. (Then $\gamma_{[p^n-1]}\in S$.) Let $F$ be the smallest finite field containing each $\gamma_k$ and each $[\chi^k]$ for all $k$ from $0$ to $[p^n-1]$; then $\gamma\in F$. The mapping $f$ is an additive homomorphism from $F$ to itself, and its kernel contains $p$ elements (the ordinals less than $p$). So the image of $f$ over $F$ must contain $[|F|/p]$ elements. But the image is contained in $F\cap S_n$, and there are exactly $[|F|/p]$ elements in $F\cap S_n$. So the image of $f$ over $F$ is exactly the set $F\cap S_n$, which contains $\gamma$. So all elements $\gamma\in S_n$ are mapped to by some element of $[\chi^{p^n}]$.
\end{proof}

That completes the proof of part 7 of Theorem \ref{closureofp}: for each $n$, $[\chi^{p^n}]$ is a root of $x^p-x-[\chi^{p^n-1}]$, since $[\chi^{p^n-1}]$ is the smallest element of $[\chi^{p^n}]$ that is not mapped to by $f(x)$.

In summary, we have now determined the full structure of $[\omega^{\omega^\omega}]\in \textbf{On}_p$, with the exception of the unknown elements $\alpha_u\in \textbf{On}_p$ for each prime $u\ne p$. (We will discuss how to find each $\alpha_u$ in the next section.) The structure is similar for all primes $p$; $[\omega^{\omega^\omega}]$ is always the first transcendental, and if $\phi\in \textbf{On}_2$ is an infinite field below $[\omega^{\omega^\omega}]$, then $\phi$ is a field in every $\textbf{On}_p$. I would conjecture that this pattern continues beyond $[\omega^{\omega^\omega}]$:

\begin{conje}
If $\phi\in \textbf{On}_2$ is an infinite field, then $\phi$ is a field in $\textbf{On}_p$ for all primes $p$. If $\phi\in \textbf{On}_2$ is a transcendental, then $\phi$ is a transcendental in $\textbf{On}_p$ for all primes $p$.
\end{conje}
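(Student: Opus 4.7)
The plan is to proceed by simultaneous transfinite induction on the field ordinals of $On_2$, proving both halves of the conjecture in parallel: if $\phi\in On_2$ is an infinite field then $\phi$ is a field in $On_p$, and if $\phi$ is algebraically closed (i.e.\ a transcendental) in $On_2$ then $\phi$ is algebraically closed in $On_p$. The base case $\phi=\omega$ is immediate, since $\omega$ is the first infinite field in every $On_p$. The limit-stage case is also immediate: in every $On_p$, suprema of fields are fields by the limit clause of the construction in Section~1, and this preserves algebraic closedness in the expected way.

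For the successor stage, let $\phi$ be a field in $On_2$ (hence in $On_p$ by induction) and let $\widetilde{\phi}$ be the next field in $On_2$. Note first that if the conjecture holds inductively, then algebraic closedness is equivalent between $On_2$ and $On_p$ at $\phi$: a failure of this equivalence would force the ordinal $[\phi^n]<[\phi^\phi]$ to be both a field in $On_p$ (by the inductive step for the next field in $On_2$) and squeezed between $\phi$ and the purportedly adjacent next field $[\phi^\phi]$ in $On_p$, a contradiction. Thus the two cases split cleanly. If $\phi$ is algebraically closed, the construction in Section~1 in both structures yields $\widetilde{\phi}=[\phi^\phi]$, and this ordinal is a field in both; otherwise, $\widetilde{\phi}=[\phi^n]$ in $On_2$ and the $On_p$ construction also produces an increasing sequence of algebraic extensions of $\phi$, each obtained by ordinal exponentiation by some integer. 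One would then need to show that $[\phi^n]$ appears in this $On_p$-sequence of algebraic extensions.

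The natural approach to this last step is to extend Theorem~\ref{closureofp} to each ``tier'' above a transcendental. Between an algebraically closed $\phi$ and its algebraic closure, one would hope that the intermediate fields in $On_p$ again form a $\chi_r$-style tower indexed by prime powers $r$, sitting atop $\phi$, with ordinal positions that depend only on $\phi$ (and the multiplicative structure of its transcendental extension $[\phi^\phi]$), not on $p$. The arguments of Section~3 translate almost verbatim once one has the right notions of ``primitive divisor'' and ``order'' relative to the non-finite base $\phi$; Lemmas like the $u$th-power criterion and the closure-under-prime-degree reduction should go through in each tier, giving matching ordinal positions across all $On_p$.

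The main obstacle is precisely this generalization above the first transcendental. Below it, Theorem~\ref{closureofp} leverages the fact that $\phi$ sits inside a finite field whose multiplicative structure is completely understood; above it, the fields are function fields over algebraically closed ground fields, whose additive and multiplicative structure depends essentially on the characteristic. In particular, characteristic $p$ introduces degree-$p$ Artin-Schreier extensions of shape $x^p-x-a$ (as in part~7 of Theorem~\ref{closureofp}) that have no analogue in $On_2$ when $p>2$, so the specific irreducible polynomials chosen in the lexicographic construction differ across characteristics. To finish the argument one must show that, in spite of these differences, each degree-$u$ extension in each tier still multiplies the field-ordinal by the same ordinal-exponentiation factor; formulating and proving the appropriate tier-indexed analogue of part~7 of Theorem~\ref{closureofp} is, I expect, the crux of the remaining work.
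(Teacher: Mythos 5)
There is a genuine gap, and it is worth being clear about its status: the statement you are proving is stated in the paper as a \emph{conjecture}, with no proof offered; the author explicitly remarks that a proof ``would seem to be well out of reach,'' since even locating the second transcendental in $On_2$ is wide open. Your proposal is an induction scheme whose every nontrivial burden is deferred to exactly that open problem. The base case and the reduction to ``matching towers above a common transcendental'' are fine as bookkeeping, but the two places where content is needed are not supplied. First, the limit stage is not ``immediate'': that the supremum of matching fields is a field is indeed automatic, but that the \emph{same} ordinal is algebraically closed (hence a transcendental) in both $On_2$ and $On_p$ requires knowing that the algebraic extensions below it are cofinal in the same way in both structures --- which is the whole question. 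Second, and decisively, your successor step needs that each algebraic extension above a transcendental $\phi$ multiplies the field ordinal by the same factor $[\phi^n]$ in every characteristic, and your justification is that the arguments of Section~3 ``translate almost verbatim.'' They do not. Every lemma in Section~3 leans on the fact that each element below $[\omega^{\omega^\omega}]$ lies in a \emph{finite} field: multiplicative orders, primitive divisors of $[p^m-1]$, the $u$th-power criterion, and the counting argument for the image of $x^p-x$ all use the cyclic structure of finite multiplicative groups. Above the first transcendental the relevant fields are infinite function fields over algebraically closed ground fields; none of that machinery applies, and the degree-$p$ steps are Artin--Schreier rather than Kummer extensions, so even the \emph{sequence of degrees} $n$ (and hence the candidate ordinals $[\phi^n]$) could a priori depend on $p$. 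Identifying this as ``the crux of the remaining work,'' as you do in your final paragraph, is honest, but it means the proposal is a research outline rather than a proof; nothing in it closes the gap that led the author to state the claim only as a conjecture.
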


But a proof of this conjecture would seem to be well out of reach. Only one transcendental in $\textbf{On}_2$ is currently known: namely, $[\omega^{\omega^\omega}]$. The problem of finding the second transcendental in $\textbf{On}_2$ is wide open; the problem would seem to be equally difficult in $\textbf{On}_p$ for other primes $p$.

\section{Effective computation below the first transcendental in $\textbf{On}_p$}

We will now discuss methods for finding the elements $\alpha_u\in \textbf{On}_p$ for each prime $u\ne p$. In \cite{Lenstra2}, it is shown that the elements $\alpha_u$ can be effectively determined in $\textbf{On}_2$; we will show that the same can be done in $\textbf{On}_p$ for every prime $p$. So multiplication can be done effectively in $[\omega^{\omega^\omega}]$; division can be done effectively as well, since all elements of $[\omega^{\omega^\omega}]$ have finite multiplicative order.

As before, for $\alpha\in[\omega^{\omega^\omega}]$, let $d(\alpha)$ be the degree of the irreducible polynomial of $\alpha$ over the field $p$. Based on our results from Section 3, we have the following proposition (which is identical to Proposition 1.8 in \cite{Lenstra2}):

\begin{proposition}\label{degreeofchi}
We have that $\chi_r$ (for any prime [power] $r$) is the smallest element of $[\omega^{\omega^\omega}]$ whose irreducible polynomial has degree divisible by $r$. In other words, if $r=[u^n]$ (for $u$ prime), then $\chi_r$ is the set of all ordinals $\alpha\in[\omega^{\omega^\omega}]$ where $d(\alpha)$ is divisible only by primes $\le u$ and where $r$ does not divide $d(\alpha)$.
\end{proposition}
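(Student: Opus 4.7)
Plan: Set $r=[u^n]$, and let $T_r=\{\alpha\in[\omega^{\omega^\omega}]:\text{every prime factor of }d(\alpha)\text{ is }\le u,\text{ and }r\nmid d(\alpha)\}$. The two clauses of the proposition amount to (i) $\chi_r=T_r$ as ordinals, and (ii) $r\mid d(\chi_r)$. My approach is to first pin down the finite subfields of $\chi_r$ and then lift this to a statement about elements using the first lemma of Section 3, which says that every $\alpha\in[\omega^{\omega^\omega}]$ lies in its minimal finite subfield $[p^{d(\alpha)}]$.

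The central claim to establish is: $[p^m]\subseteq\chi_r$ if and only if every prime factor of $m$ is $\le u$ and $[u^n]\nmid m$. The $(\Leftarrow)$ direction is constructive. Factor $m=m_0u^{e_u}$ with $\gcd(m_0,u)=1$ and $e_u\le n-1$. By Theorem \ref{closureofp}(3), $\chi_u$ is closed under extensions of every prime degree strictly less than $u$, so the closure-under-prime-products lemma of Section 3 yields $[p^{m_0}]\subseteq\chi_u$. Climbing $e_u$ rungs of the tower $\chi_u\subseteq\chi_{[u^2]}\subseteq\cdots\subseteq\chi_{[u^n]}$ (each a degree-$u$ extension, again by Theorem \ref{closureofp}(3)) adjoins one further factor of $u$ to the degree at each step, producing $[p^{m_0 u^{e_u}}]=[p^m]$ inside $\chi_{[u^{e_u+1}]}\subseteq\chi_r$. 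The $(\Rightarrow)$ direction proceeds by induction on $n$, exploiting the same tower: if $[p^m]\subseteq\chi_{[u^{n+1}]}$, then the intersection $[p^m]\cap\chi_{[u^n]}$ is itself a finite subfield $[p^{m'}]$ of $\chi_{[u^n]}$, and because $\chi_{[u^{n+1}]}/\chi_{[u^n]}$ has degree $u$, the index $m/m'$ lies in $\{1,u\}$; the inductive bound on $m'$ then translates into the asserted bound on $m$.

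With this lemma in hand, $\alpha\in\chi_r$ iff $[p^{d(\alpha)}]\subseteq\chi_r$ iff $d(\alpha)$ meets the $T_r$-conditions, establishing (i). For (ii), apply the same lemma to the next field $\chi_{r^+}=\chi_{[u^{n+1}]}$: since $\chi_r\in\chi_{r^+}$, every prime factor of $d(\chi_r)$ is $\le u$. But $\chi_r\notin\chi_r=T_r$, so $\chi_r$ must violate one of the two defining conditions of $T_r$; the prime-factor condition holds, so the violation is forced to be $r\mid d(\chi_r)$. The ``smallest'' clause is then immediate: every $\alpha<\chi_r$ lies in $T_r$ and therefore satisfies $r\nmid d(\alpha)$, while $\chi_r$ itself satisfies $r\mid d(\chi_r)$.

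The main obstacle is the $(\Rightarrow)$ direction of the finite-subfield lemma. The inductive step requires honestly verifying that the degree-$u$ extension $\chi_{[u^{n+1}]}/\chi_{[u^n]}$ enlarges a finite subfield $[p^{m'}]\subseteq\chi_{[u^n]}$ to a subfield whose degree over $\FF_p$ is either $m'$ or $u m'$ and nothing else, which is where finite-field Galois theory needs to be invoked carefully. The remaining steps are routine consequences of Theorem \ref{closureofp} and the bookkeeping of prime-power factorizations.
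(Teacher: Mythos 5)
Your proposal is correct and follows essentially the route the paper intends: the paper states this proposition without proof, as an immediate consequence of the Section 3 structure theory (it is Lenstra's Proposition 1.8 transplanted to $On_p$), and your characterization of the finite subfields of $\chi_{[u^n]}$ --- those of order $[p^m]$ with every prime factor of $m$ at most $u$ and with $m$ not divisible by $[u^n]$ --- is exactly the bridge that theory supplies, these being facts the paper itself already uses in proving parts 4 and 5 of Theorem \ref{closureofp} (finite subfields of $\chi_u$ have degree composed of primes less than $u$, and $d(\chi_{[u^n]})=[ku^n]$). Your passage from subfields to elements via minimal finite fields, together with the observation that $\chi_r\notin\chi_r$ forces $r$ to divide $d(\chi_r)$, completes the deduction as intended, so there is nothing to flag beyond the routine finite-field Galois facts you already acknowledge.
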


Following \cite{Lenstra2}, we will extend this notation and define $\chi_h$ for all positive integers $h$; $\chi _h$ is the smallest ordinal $\alpha\in[\omega^{\omega^\omega}]$ where $d(\alpha)$ is divisible by $h$. (We clearly have $\chi _1=0$.)

The following lemma is a generalization of Lemma 2.5 in \cite{Lenstra2}.

\begin{lemma}\label{degreeofsums}
Let $\beta,\gamma\in \textbf{On}_p$ be elements of $[\omega^{\omega^\omega}]$, and let $r=[u^n]$ be a [power] of a prime $u$. If $r$ divides $d(\beta)$ but not $d(\gamma)$, then $r$ divides both $d(\beta+\gamma)$ and $d(\beta-\gamma)$.
\end{lemma}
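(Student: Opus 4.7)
The plan is to work entirely in terms of the degrees $d(\alpha)$ and exploit the fact that, below $[\omega^{\omega^\omega}]$, every element lies in a finite field, so the smallest field containing a given element $\alpha$ is exactly the order-$[p^{d(\alpha)}]$ subfield of $On_p$, and the smallest field containing two elements $\alpha_1, \alpha_2$ has order $[p^{\operatorname{lcm}(d(\alpha_1),d(\alpha_2))}]$. I will translate the hypothesis and conclusion into statements about the $u$-adic valuation of these degrees, and then recover $\beta$ from $\beta + \gamma$ and $\gamma$ to get the needed divisibility.

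Concretely, set $a = d(\beta)$, $b = d(\gamma)$, $c = d(\beta+\gamma)$, and let $v_u$ denote the $u$-adic valuation. The hypothesis says $v_u(a) \ge n$ and $v_u(b) < n$. Both $\beta$ and $\gamma$ lie in the finite field $F$ of order $[p^{\operatorname{lcm}(a,b)}]$, so $\beta + \gamma \in F$, which forces $c \mid \operatorname{lcm}(a,b)$. But this alone is not enough: I need a divisibility going the other way. The observation is that $\beta = (\beta+\gamma) - \gamma$, so $\beta$ lies in any field containing both $\beta + \gamma$ and $\gamma$; hence $\beta$ lies in the field of order $[p^{\operatorname{lcm}(c,b)}]$, which gives $a \mid \operatorname{lcm}(c,b)$.

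Taking $u$-adic valuations of $a \mid \operatorname{lcm}(c,b)$ yields $n \le v_u(a) \le \max(v_u(c), v_u(b))$. Since $v_u(b) < n$ by hypothesis, the maximum must be achieved by $v_u(c)$, so $v_u(c) \ge n$, i.e.\ $r = [u^n]$ divides $d(\beta+\gamma)$. The same argument with $\beta - \gamma$ in place of $\beta + \gamma$ works verbatim, using $\beta = (\beta-\gamma) + \gamma$ and the fact that $d(-\gamma) = d(\gamma)$ (since $-\gamma$ generates the same subfield of $On_p$ over $p$ as $\gamma$).

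There is essentially no obstacle here; the whole content of the lemma is that degrees of generators of finite subfields of $On_p$ satisfy the usual \emph{compositum = lcm} formula, and the only thing to check is that the recovery $\beta = (\beta+\gamma) - \gamma$ combined with the $u$-adic inequality $v_u(\operatorname{lcm}(x,y)) = \max(v_u(x), v_u(y))$ forces the prime power $r$ to survive in $d(\beta+\gamma)$. I expect the write-up to be only a few lines once the notation is set up.
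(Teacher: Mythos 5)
Your argument is correct and is essentially the paper's own proof: the paper likewise observes that $\beta\in p(\gamma,\beta+\gamma)$, a field of degree $\mathrm{lcm}(d(\gamma),d(\beta+\gamma))$ over $p$, so $d(\beta)$ divides that lcm, and since the prime [power] $r$ divides $d(\beta)$ but not $d(\gamma)$ it must divide $d(\beta+\gamma)$, with the same reasoning for $\beta-\gamma$. Your $u$-adic valuation phrasing and the preliminary remark about $c\mid\mathrm{lcm}(a,b)$ are just cosmetic differences.
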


\begin{proof}
We have $\beta\in p(\gamma,\beta+\gamma)$, which is an extension of $p$ of degree $\text{lcm}(d(\gamma),d(\beta+\gamma))$. So $d(\beta)$ divides $\text{lcm}(d(\gamma),d(\beta+\gamma))$.

Since $r$ divides $d(\beta)$, $r$ divides $\text{lcm}(d(\gamma),d(\beta+\gamma))$; since $r$ does not divide $d(\gamma)$, $r$ must divide $d(\beta+\gamma)$.

Similar reasoning shows that $r$ divides $d(\beta-\gamma)$.
\end{proof}

Lemma \ref{degreeofsums} will be used in proving the following two results; they are generalizations to $\textbf{On}_p$ of Theorem 2.1 and Corollary 2.2 in \cite{Lenstra2}.

\begin{theorem}\label{chisums}
Let $h>1$ be a natural number, let $u$ be the smallest prime dividing $h$, let $r$ be the largest [power] of $u$ dividing $h$, and let $g=[h/r]$. Then $\chi_h=\chi_g$ if $r$ divides $d(\chi_g)$; otherwise, $\chi_h=\chi_g+\chi_r=[\chi_g+\chi_r]$.
\end{theorem}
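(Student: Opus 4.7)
The plan is strong induction on $h$. In the easy direction, when $r \mid d(\chi_g)$, since $g \mid d(\chi_g)$ by definition and $\gcd(g,r)=1$, we have $h=gr \mid d(\chi_g)$, so $\chi_g$ satisfies the defining property of $\chi_h$, giving $\chi_h \le \chi_g$; conversely any $\alpha$ with $h \mid d(\alpha)$ has $g \mid d(\alpha)$ and hence $\alpha \ge \chi_g$. Thus $\chi_h = \chi_g$.

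Assume now $r \nmid d(\chi_g)$ and set $\sigma=[\chi_g+\chi_r]$. The key structural claim is that every base-$p$ digit position appearing in $\chi_g$ strictly exceeds $\omega^k u^{n-1}$, the single position of $\chi_r$ (with $k$ the number of primes less than $u$). To see this, I would invoke the theorem recursively on proper divisors of $h$: peeling off the smallest prime-power factor of $g$ in turn expresses $\chi_g$ as a Nim sum of some subcollection of $\{\chi_{v^{m_v}} : v^{m_v} \text{ a maximal prime power dividing } g\}$. Each $\chi_{v^{m_v}}=[p^{(\omega^{k_v} v^{m_v-1})}]$ is a lone base-$p$ digit at position $\omega^{k_v} v^{m_v-1}$, and since $v>u$ forces $k_v \ge k+1$, this position is at least $\omega^{k+1}>\omega^k u^{n-1}$. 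By the componentwise base-$p$ addition result of Section 2, disjoint and properly ordered positions force the Nim and ordinal sums to coincide, so $\chi_g+\chi_r=\sigma$.

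Next I would apply Lemma \ref{degreeofsums} twice to obtain $h \mid d(\sigma)$. Taking $(\beta,\gamma)=(\chi_r,\chi_g)$ with prime power $r$ gives $r \mid d(\sigma)$, using Case 2 for the hypothesis $r \nmid d(\chi_g)$. Taking $(\beta,\gamma)=(\chi_g,\chi_r)$ with any maximal prime power $v^m$ of $g$ gives $v^m \mid d(\sigma)$, since $v^m \mid d(\chi_g)$ (from $g \mid d(\chi_g)$) and $v^m \nmid d(\chi_r)$ (by Proposition \ref{degreeofchi}, $d(\chi_r)$ has only prime factors $\le u<v$). Running over all such $v^m$ yields $g \mid d(\sigma)$, and hence $h=gr \mid d(\sigma)$.

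For minimality, suppose $\alpha<\sigma$ has $h \mid d(\alpha)$. Since $g \mid d(\alpha)$ we get $\alpha \ge \chi_g$; Case 2 rules out $\alpha=\chi_g$, so write $\alpha=[\chi_g+\gamma]$ with $0<\gamma<\chi_r$. The same position analysis identifies this ordinal with the Nim sum $\chi_g+\gamma$, whose degree divides $lcm(d(\chi_g),d(\gamma))$. Neither summand's degree is divisible by $r$ --- $d(\chi_g)$ by Case 2, $d(\gamma)$ by Proposition \ref{degreeofchi} applied to $\gamma<\chi_r$. Hence $r \nmid d(\alpha)$, contradicting $r \mid h \mid d(\alpha)$. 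The main obstacle is the structural claim in the second paragraph: the theorem under proof must be invoked on proper divisors $g<h$, so the induction has to be laid out carefully, and one must verify that each recursive step only contributes base-$p$ digits at positions of the claimed form.
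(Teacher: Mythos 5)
Your proposal is correct and follows essentially the same route as the paper: induction peeling off the smallest prime-power factor, using the decomposition of $\chi_g$ into terms $\chi_{r'}$ with larger primes to identify the Nim sum $\chi_g+\chi_r$ with the ordinal sum, applying Lemma \ref{degreeofsums} to each prime-power factor to get $h\mid d([\chi_g+\chi_r])$, and a degree argument for minimality. Your minimality step (contradiction via $d(\chi_g+\gamma)\mid\mathrm{lcm}(d(\chi_g),d(\gamma))$) is just a rephrasing of the paper's use of Lemma \ref{degreeofsums} on the difference $\chi_h-\chi_g$, so there is no substantive difference.
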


\begin{proof}
The theorem is proven by induction on the number of primes dividing $h$. If $h=r$, then $g=1$; the theorem holds true in that case, since $\chi_h=0+\chi_r=\chi_g+\chi_r$, and $r$ does not divide $d(\chi_g)=d(0)=1$. So assume $h$ has at least two distinct prime divisors. Note that we must have $\chi_h\ge\chi_g$, since $g$ divides $h$.

By the inductive hypothesis, $\chi_g$ is a finite sum of terms $\chi_{r'}$, where each $r'$ is a [power] of a larger prime than $u$. Thus, each $\chi_{r'}$ is larger than $\chi_r$. We can thus conclude that, for all $\alpha\le\chi_r$, we have $\chi_g+\alpha=[\chi_g+\alpha]$. Specifically, we have $\chi_g+\chi_r=[\chi_g+\chi_r]$.

Assume that $r$ does divide $d(\chi_g)$. By definition, $g$ also divides $d(\chi_g)$; thus, $h$ divides $d(\chi_g)$. But $\chi_h$ is the smallest element of $[\omega^{\omega^\omega}]$ whose minimal polynomial has degree a multiple of $h$; thus, $\chi_g\ge\chi_h$. We already showed $\chi_h\ge\chi_g$, hence they are equal.

Now assume that $r$ does not divide $d(\chi_g)$. Since $r$ does divide $d(\chi_r)$, it follows from Lemma \ref{degreeofsums} that $r$ divides $d(\chi_g+\chi_r)$. Now, $g$ and $d(\chi_r)$ are relatively prime, since every prime dividing $d(\chi_r)$ is at most $u$ (based on the work in Section 3), but every prime dividing $g$ is greater than $u$. On the other hand, $g$ divides $d(\chi_g)$. So applying Lemma
\ref{degreeofsums} to each prime [power] factor of $g$, we get that $g$ must divide $d(\chi_g+\chi_r)$.

But if both $g$ and $r$ divide $d(\chi_g+\chi_r)$, then $h$ must divide $d(\chi_g+\chi_r)$. Thus, $\chi_h\le\chi_g+\chi_r=[\chi_g+\chi_r]$.

We've now proven that $\chi_g\le\chi_h\le[\chi_g+\chi_r]$; that is only possible if $\chi_h=[\chi_g+\alpha]$ for some $\alpha\le\chi_r$. Thus $\chi_h=\chi_g+\alpha$. But then $\alpha=\chi_h-\chi_g$, and since $r$ divides $d(\chi_h)$ but not $d(\chi_g)$, $r$ must divide $d(\alpha)$ by Lemma \ref{degreeofsums}. Since $r$ divides $d(\alpha)$, we have $\alpha\ge\chi_r$; thus, $\alpha=\chi_r$, and we're done.
\end{proof}

\begin{corollary}
For every natural number $h$, there is a unique finite set $Q(h)$ of prime [powers] where $\displaystyle\chi_h=\sum_{r\in Q(h)}\chi_r$. (When there is ambiguity about the choice of field $\textbf{On}_p$, we will write $Q_p(h)$ for $Q(h)$.) Every $r\in Q(h)$ divides $h$ and is relatively prime to $[h/r]$. Finally, if $h>1$ and $u$ is the largest prime dividing $h$, then the largest [power] of $u$ dividing $h$ belongs to $Q(h)$.
\end{corollary}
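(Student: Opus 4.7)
The plan is to induct on the number $\omega(h)$ of distinct primes dividing $h$, with Theorem \ref{chisums} as the recursive engine. For the base cases, set $Q(1)=\emptyset$ so the empty sum gives $0=\chi_1$; when $h$ is itself a prime power, Theorem \ref{chisums} applied with $g=1$ yields $\chi_h=\chi_1+\chi_h$, so $Q(h)=\{h\}$ works. For the inductive step, let $u$ be the smallest prime dividing $h$, let $r=u^n$ be the largest power of $u$ dividing $h$, and put $g=h/r$; then $\omega(g)=\omega(h)-1$ and $\gcd(g,r)=1$. Theorem \ref{chisums} gives either $\chi_h=\chi_g$, in which case define $Q(h)=Q(g)$, or $\chi_h=[\chi_g+\chi_r]$, in which case define $Q(h)=Q(g)\cup\{r\}$.

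The divisibility and coprimality properties fall out directly. Each $r'\in Q(g)\subseteq Q(h)$ divides $g\mid h$ and is coprime to $h/r'=r\cdot(g/r')$, since $\gcd(r',g/r')=1$ holds by induction and $\gcd(r',r)=1$ because every prime dividing $r'$ exceeds $u$. In the second case, the newly added $r$ divides $h$ and $\gcd(r,h/r)=\gcd(r,g)=1$ by the choice of $r$. For the "largest prime" claim, when $h$ is a prime power it is immediate; otherwise the largest prime $v$ dividing $h$ also divides $g$, and the largest power of $v$ dividing $h$ equals that dividing $g$, which lies in $Q(g)\subseteq Q(h)$ by induction.

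The main obstacle will be uniqueness. The strategy is to read off $Q$ directly from the base-$p$ expansion of $\chi_h$. For $r=u^n$ with $u$ an odd prime, the definition gives $\chi_r=p^{\omega^k u^{n-1}}$ (where $k\ge 1$ is the number of primes below $u$), a single base-$p$ digit $1$ at the infinite position $\omega^k u^{n-1}$; for $r=2^n$, $\chi_r=p^{2^{n-1}}$ is a single digit at the finite position $2^{n-1}$. These positions are pairwise distinct as $r$ ranges over all prime powers, since different primes $u$ give different $\omega$-parts of the position and different exponents $n$ for the same $u$ give different coefficients. By the componentwise base-$p$ addition formula of Section 2, the $On_p$ sum $\sum_{r\in Q}\chi_r$ is precisely the ordinal whose base-$p$ expansion carries $1$'s at exactly these positions. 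Hence $Q$ is uniquely recovered from the base-$p$ expansion of $\chi_h$, and any other finite set of prime powers would produce a different sum.
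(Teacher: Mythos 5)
Your proposal is correct and follows essentially the same route as the paper: the paper's proof is exactly the induction via Theorem \ref{chisums}, peeling off the largest power $r$ of the smallest prime and rewriting $\chi_h$ as $\chi_g$ or $\chi_g+\chi_r$. Your additional verification of the divisibility/coprimality claims and the uniqueness argument via the carry-free base-$p$ expansion (each $\chi_r=[p^{\omega^k u^{n-1}}]$ occupying a distinct digit position) simply makes explicit what the paper leaves implicit, and it is sound.
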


This corollary follows from Theorem \ref{chisums} by induction; we can rewrite $\chi_h$ as either $\chi_g$ or $\chi_g+\chi_r$, and if $g$ is not a prime [power], then we can rewrite $\chi_g$ in a similar fashion, and so on.

Next, a lemma generalizing Lemma 3.4 from \cite{Lenstra2}.

\begin{lemma}\label{fieldgenerator}
If $\chi\in[\omega^{\omega^\omega}]$, then the multiplicative group of the field $\omega(\chi)$ is generated by the elements $\chi+m$,
$m\in\omega$.
\end{lemma}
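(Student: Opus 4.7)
The plan is to reduce the lemma to a statement about multiplicative groups of finite extensions of finite fields, then establish that statement via a character-sum argument analogous to Lemma 3.4 of \cite{Lenstra2}.

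First I reduce to the finite-field case. Since $\chi\in[\omega^{\omega^\omega}]$ is algebraic over $p$, $\chi$ lies in some finite subfield of $On_p$. Given any $\alpha\in\omega(\chi)^\times$, one may write $\alpha$ as a polynomial in $\chi$ with finitely many coefficients in $\omega=\bigcup_k[p^{2^k}]$, so $\alpha\in F(\chi)$ for some finite subfield $F\subseteq\omega$. Hence it suffices to prove the following finite-field claim: for every sufficiently large finite subfield $F\subseteq\omega$, the cyclic group $F(\chi)^\times$ is generated by $\{\chi+m:m\in F\}$.

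To attack the finite-field claim, set $q=|F|$ and $n=[F(\chi):F]$, so $F(\chi)^\times$ is cyclic of order $q^n-1$. Arguing by contradiction, I would suppose $G=\langle\chi+m:m\in F\rangle$ is a proper subgroup. Then there is a prime $\ell\mid q^n-1$ and a nontrivial character $\psi$ of $F(\chi)^\times$ of order $\ell$ with $\psi(\chi+m)=1$ for every $m\in F$, giving $\sum_{m\in F}\psi(\chi+m)=q$. The strategy is to contradict this using a Weil-type bound. The character $\psi$ factors through the norm $N_{F(\chi)/F}$ precisely when $\ell\mid q-1$; in that case $\psi(\chi+m)=\psi'((-1)^n g(-m))$ with $g\in F[y]$ the irreducible minimal polynomial of $\chi$, and since $g$ is irreducible of degree $n$ it cannot be a constant times an $\ell$-th power in $F[y]$, so Weil's classical bound yields $|\sum_{m\in F}\psi'(g(-m))|\le(n-1)\sqrt{q}$, which is strictly less than $q$ whenever $q>(n-1)^2$.

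The main obstacle is the remaining case $\ell\nmid q-1$ (equivalently, $\ell\mid(q^n-1)/(q-1)$), where $\psi$ is nontrivial on $\ker N\subseteq F(\chi)^\times$ and does not factor through the norm. Here the sum is indexed over the proper subfield $F\subset F(\chi)$ while $\psi$ is a character of the larger group $F(\chi)^\times$, so the standard Weil bound over $F(\chi)$ is not directly applicable. I would expand the indicator function of $F$ inside $F(\chi)$ in terms of the additive characters of $F(\chi)^+$ trivial on $F^+$ and analyze the resulting weighted sum of Gauss sums, using the Galois action of $\mathrm{Gal}(F(\chi)/F)$ on $\psi$ to produce the necessary cancellations. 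This is essentially the content of Lenstra's Lemma 3.4 in \cite{Lenstra2} (proved there for $p=2$), and the plan is to adapt that argument to arbitrary prime $p$. Once the finite-field claim is established for all $F\subseteq\omega$ with $|F|$ beyond the threshold dictated by $n=[F(\chi):F]$, any given $\alpha\in\omega(\chi)^\times$ can be placed in $F(\chi)^\times$ for such an $F$ by simply enlarging $F$ within $\omega$, completing the proof.
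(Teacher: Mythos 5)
There is a genuine gap, and it sits exactly where you flag it: the case $\ell\nmid q-1$. Your argument is complete only when the character $\psi$ factors through the norm, so that the classical Weil bound for multiplicative character sums $\sum_{m\in F}\psi'(g(-m))$ applies. In the remaining case you offer a plan (expanding the indicator function of $F$ in additive characters and hoping for cancellation among Gauss sums via the Galois action) rather than a proof; what is actually needed there is the nontrivial Weil-type estimate $\bigl|\sum_{m\in F}\psi(\chi+m)\bigr|\le (n-1)\sqrt{q}$ for an \emph{arbitrary} nontrivial character $\psi$ of $F(\chi)^\times$ summed over the proper subfield $F$. Such a bound does exist (it follows from the Riemann hypothesis for the relevant $L$-functions over function fields, and is the engine behind results on primitive elements of the form $\chi+m$), so your strategy could in principle be completed, but as written the decisive case is unproven. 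Your reduction to finite subfields of $\omega$ is fine, since $n=[F(\chi):F]$ is bounded by $d(\chi)$ and one may enlarge $F$ past the threshold $q>(n-1)^2$.

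Moreover, your fallback -- that this hard case ``is essentially the content of Lenstra's Lemma 3.4'' -- misreads what that lemma's proof does, and hence does not rescue the argument. The paper (following Lenstra verbatim) argues quite differently and more softly: write $\beta=G(\chi)$ with $G$ of degree less than that of the minimal polynomial $F$ of $\chi$, pass to the finite field $\mu$ generated by the coefficients of $F$ and $G$, note $\gcd(F,G)=1$ in $\mu[x]$, and invoke the Kornblum--Artin analogue of Dirichlet's theorem on primes in arithmetic progressions to produce a monic irreducible $H\in\mu[x]$ with $H\equiv G\pmod{F}$ and $\deg H$ a [power] of $2$. Because $\omega$ is quadratically closed, $H$ splits into linear factors over $\omega$, so $\beta=H(\chi)=\prod_i(\chi+m_i)$ is exhibited directly as a product of the desired generators -- no character sums, no case distinction, and no largeness hypothesis on the base field. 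If you want to keep your character-sum route, you must either prove or correctly cite the subfield character-sum bound above for all nontrivial $\psi$; otherwise the Kornblum--Artin argument is the shorter and self-contained path.
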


The proof is identical to the proof in \cite{Lenstra2}; the proof does not depend on the value of $p$.

\begin{proof}
Let $f(x)=\sum x^ia_i\in\omega[x]$ (where each $a_i\in \omega$) be the minimal polynomial in $\omega[x]$ with $\chi$ as a root. Let $\beta$ be any nonzero element of $\omega(\chi)$; assume $\beta=\sum \chi^jb_j$, where each $b_j\in \omega$. Let $\mu$ be the subfield of $\omega$ generated by all the coefficients $a_i$ and $b_j$. Then the polynomials $g(x)=\sum x^jb_j$ and $f(x)$ are both contained in $\mu[x]$. Since $f(x)$ is irreducible in $\mu[x]$, and since $g(x)$ (a nonzero polynomial) has smaller degree than $f(x)$, $g(x)$ and $f(x)$ must be relatively prime in $\mu[x]$.

We may then apply Kornblum-Artin's analogue of Dirichlet's theorem on primes in arithmetic progressions (which appears on pg. 94 of \cite{Artin} and on pg. 39 of \cite{Rosen}); if $t\in\omega$ is sufficiently large, then there exists a monic polynomial $h(x)\in\mu[x]$ of degree $t$ where $h(x)\equiv g(x)$ mod $f(x)$ (hence, $h(\chi)=\beta$). If $t$ is chosen to be a [power] of $2$, then since $\omega$ is quadratically closed, $h(x)$ is a product of linear factors in $\omega[x]$. If $\displaystyle h(x)=\prod_i(x+m_i)$ (where each $m_i\in\omega$), then $\displaystyle\beta=h(\chi)=\prod_i(\chi+m_i)$. So all nonzero elements of $\omega(\chi)$ are products of elements of form $\chi+m$ ($m\in\omega$), and that proves the lemma.
\end{proof}

We will use Lemma \ref{fieldgenerator} to prove the next theorem, a generalization to $\textbf{On}_p$ of Theorem 3.1 from \cite{Lenstra2}. It allows us to fully classify the elements $\alpha_u$ up to a finite term.

For any prime $u\ne p$, let $\zeta_u$ be a primitive $u$th root of unity in $[\omega^{\omega^\omega}]$. Let $f(u)=d(\zeta_u)$; then $u$ is a primitive divisor of $[p^{f(u)}-1]$. (Thus, $f(u)$ is a divisor of $[u-1]$.)

\begin{theorem}
For any prime number $u\ne p$, there exist natural numbers $m$ and $m'$ where $\alpha_u=[\chi_{f(u)}+m]=\chi_{f(u)}+m'$.
\end{theorem}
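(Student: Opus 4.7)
The plan is to sandwich $\alpha_u$ between $\chi_{f(u)}$ from below and $\chi_{f(u)} + m$ from above for some natural $m$, then observe that the entire interval lies inside the finite field $\FF_{p^{f(u)}}$, forcing both the ordinal and Nim differences to be natural numbers.

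First I would establish $\alpha_u \ge \chi_{f(u)}$. Since $\alpha_u$ has no $u$th root in $\chi_u$, it has no $u$th root in its minimal finite subfield $\FF_{p^{d(\alpha_u)}}$; by the earlier lemma characterising $u$th powers in finite fields, this forces $u \mid [p^{d(\alpha_u)} - 1]$, so $f(u) \mid d(\alpha_u)$, and hence $\alpha_u \ge \chi_{f(u)}$ by the definition of the latter.

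Next I would produce a non-$u$th power $\beta$ inside $\FF_{p^{f(u)}}$ that persists in $\chi_u$: take $\beta$ to be a multiplicative generator of $\FF_{p^{f(u)}}^\times$, so $ord(\beta) = [p^{f(u)} - 1]$, whose $u$-part is $u^{m_0}$ with $m_0 \ge 1$. Every finite subfield of $\chi_u$ containing $\beta$ has the form $\FF_{p^{f(u)n}}$ with all prime factors of $n$ strictly less than $u$, hence $\gcd(n, u) = 1$; the number-theoretic lemma from \cite{DiMuro} then yields $v_u([p^{f(u)n} - 1]) = v_u(n) + v_u([p^{f(u)} - 1]) = m_0$, and the earlier lemma on $u$th powers shows $\beta$ is not a $u$th power in any such subfield, hence not in $\chi_u$. (The exceptional case $u = 2$, $p \equiv 3 \pmod 4$ of the \cite{DiMuro} lemma is harmless, since then $f(u) = 1$ and $\chi_u = p$ admits no proper finite extensions in which the issue could appear.) Now I would apply Lemma \ref{fieldgenerator} with $\chi = \chi_{f(u)}$: the group $\omega(\chi_{f(u)})^\times$ is generated by $\{\chi_{f(u)} + m : m \in \omega\}$, and if each generator were a $u$th power in $\chi_u$ the whole group would be, contradicting $\beta \in \FF_{p^{f(u)}}^\times \subseteq \omega(\chi_{f(u)})^\times$. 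So some $\chi_{f(u)} + m$ (Nim sum, $m \in \omega$) fails to be a $u$th power in $\chi_u$, giving $\alpha_u \le \chi_{f(u)} + m$.

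Since this upper bound is computed as a Nim sum inside $\FF_{p^{f(u)}}$, we conclude $\alpha_u < [p^{f(u)}]$, so both $\alpha_u$ and $\chi_{f(u)}$ are ordinals below $[p^{f(u)}]$. The ordinal difference $m := \alpha_u - \chi_{f(u)}$ and the Nim difference $m'$ (computed inside the field $\FF_{p^{f(u)}}$) are therefore both natural numbers, giving $\alpha_u = [\chi_{f(u)} + m] = \chi_{f(u)} + m'$. The main obstacle is the number-theoretic verification that the $u$-part of $[p^{f(u)n} - 1]$ stays capped at $u^{m_0}$ as $n$ ranges over integers whose prime factors lie below $u$; this is exactly where $\gcd(n, u) = 1$ is essential, and where the lifting-the-exponent computation from \cite{DiMuro} does the crucial work.
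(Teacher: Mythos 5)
Your sandwich $\chi_{f(u)}\le\alpha_u\le\chi_{f(u)}+m_0$ is obtained essentially as in the paper. The lower bound via the $u$th-power criterion in $p(\alpha_u)$ (you argue $u\mid[p^{d(\alpha_u)}-1]$ directly; the paper argues $\zeta_u\in p(\alpha_u)$ --- the same fact) is fine, and the upper bound --- exhibit a $\beta\in\omega(\chi_{f(u)})$ that is not a $u$th power in $\chi_u$, factor it by Lemma \ref{fieldgenerator}, and conclude some factor $\chi_{f(u)}+m_0$ is not a $u$th power in $\chi_u$ --- is the paper's argument as well; your explicit choice of $\beta$ as a generator of $\FF_{p^{f(u)}}^\times$, with persistence in $\chi_u$ checked through the lemma from \cite{DiMuro} and the $u$th-power lemma, is a harmless variant of the paper's appeal to the non-surjectivity of the $u$th-power map on $p(\chi_{f(u)})^*$, and your handling of the exceptional case of the \cite{DiMuro} lemma is correct since $u=2$ forces $n=1$ there.

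The final step, however, is wrong as stated. You claim that the upper bound ``is computed as a Nim sum inside $\FF_{p^{f(u)}}$'' and conclude that $\alpha_u$ and $\chi_{f(u)}$ both lie below $[p^{f(u)}]$. But only $\beta$ lives in $\FF_{p^{f(u)}}$; the factors $\chi_{f(u)}+m$ live in $\omega(\chi_{f(u)})$, and $\chi_{f(u)}$ --- the smallest ordinal whose degree is divisible by $f(u)$ --- is in general an infinite ordinal, not an element of $\FF_{p^{f(u)}}$. For example, in $On_2$ with $u=7$ we have $f(7)=3$, $\chi_3=\omega$ and $\alpha_7=\omega+1$, which is certainly not below $[2^3]=8$. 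So the assertion $\alpha_u<[p^{f(u)}]$ fails, and with it your justification that the ordinal difference and the Nim difference are natural numbers. The missing ingredient is the paper's closing argument: write $\chi_{f(u)}=[\lambda+m_1]$ with $\lambda$ a limit ordinal and $m_1\in\omega$; since addition in $On_p$ is base-$p$ addition without carrying, $\chi_{f(u)}+m_0=[\lambda+m_2]$ for some natural number $m_2$, so the sandwich forces $\alpha_u=[\lambda+k]$ with $m_1\le k\le m_2$, and from this both representations $\alpha_u=[\chi_{f(u)}+m]$ and $\alpha_u=\chi_{f(u)}+m'$ follow (the latter again by carry-free addition of natural numbers onto the finite part). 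The repair is short, but it is a genuinely needed step that your write-up replaces with a false containment.
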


As in \cite{Lenstra2}, we will refer to $m$ as the ``excess'' of $\alpha_u$ over $\chi_{f(u)}$.

\begin{proof}
By definition, $\alpha_u$ is not a $u$th power in the field $\chi_u$. Let $F=p(\alpha_u)$, and let $F^*$ be the multiplicative group of the field $F$. Consider the additive homomorphism $\theta_1:F^*\rightarrow F^*$, where $\theta_1(a)=a^u$; $\alpha_u$ is not in the image of this map, so it is not surjective. So it is not injective either; there are elements of $F$ (other than 1) whose $u$th power is $1$, and the only such elements are the primitive $u$th roots of unity. Thus, $\zeta_u\in F=p(\alpha_u)$. So $d(\alpha_u)$ is a multiple of $d(\zeta_u)=f(u)$, so $\alpha_u\ge\chi_{f(u)}$.

On the other hand, since $d(\chi_{f(u)})$ is divisible by $f(u)$, $\zeta_u$ must be an element of $p(\chi_{f(u)})$. Let $G=p(\chi_{f(u)})$, and let $G^*$ be the multiplicative group of the field $G$. Consider the additive homomorphism $\theta_2:G^*\rightarrow G^*$, where $\theta_2(a)=a^u$; since $\zeta_u\in G^*$, this map cannot be injective. So it is not surjective either; there is some $\beta\in G$ that is
not a $u$th power in $G$. Now, we have $G=p(\chi_{f(u)})\subseteq\chi_u$, and all extensions of $p(\chi_{f(u)})$ contained within $\chi_u$ are of degree less than $u$. So, $\beta$ is not a $u$th power in $\chi_u$.

Note that $\beta\in\omega(\chi_{f(u)})$; by Lemma \ref{fieldgenerator}, we can write $\beta$ as a product of elements of the form $\chi_{f(u)}+m$, $m\in\omega$. Since $\beta$ is not a $u$th power in $\chi_u$, at least one of the factors $\chi_{f(u)}+m_0$ is also not a $u$th power in $\chi_u$. Thus, $\alpha_u\le\chi_{f(u)}+m_0$ (since $\alpha_u$ is the smallest element of $\chi_u$ that is not a $u$th power in $\chi_u$).

Finally, write $\chi_{f(u)}$ as $[\lambda+m_1]$, where $\lambda$ is a limit ordinal and $m_1\in\omega$. So, $\alpha_u\ge[\lambda+m_1]$. We have $\lambda+m=[\lambda+m]$ for all $m\in\omega$, so $\alpha_u\le\chi_{f(u)}+m_0=\lambda+m_1+m_0=[\lambda+m_2]$, where $m_2=m_1+m_0$. So we have $[\lambda+m_1]\le\alpha_u\le[\lambda+m_2]$ for some natural numbers $m_1$ and $m_2$; $\alpha_u=[\lambda+m_1+m]$ for some natural
number $m$.

From that, we can conclude the following: $\alpha_u=[(\lambda+m_1)+m]=[\chi_{f(u)}+m]$, and
$\alpha_u=[\lambda+(m_1+m)]=\lambda+[m_1+m]=\chi_{f(u)}-m_1+[m_1+m]=\chi_{f(u)}+m'$, for some natural number $m'$. That completes the proof.
\end{proof}

Using these theorems, and with the aid of Mathematica, I was able to assemble Tables \ref{On2} through \ref{On11}, which contain the elements $\alpha_u\in \textbf{On}_p$ for $u\le 43$, $p\le 11$. (The table for $\textbf{On}_2$ first appeared in \cite{Lenstra2}.)

\begin{table}
\caption[]{$\alpha_u\in \textbf{On}_2$} \label{On2}
\begin{tabular}{|c|c|c|c|c|}
\hline $u$ & $f(u)$ & $Q(f(u))$ & excess & $\alpha_u$ \\
\hline 3 & 2 & \{2\} & 0 & 2 \\
5 & 4 & \{4\} & 0 & $[2^2]$ \\
7 & 3 & \{3\} & 1 & $[2^\omega]+1$ \\
11 & 10 & \{5\} & 1 & $[2^{\omega^2}]+1$ \\
13 & 12 & \{3,4\} & 0 & $[2^\omega]+[2^2]$ \\
17 & 8 & \{8\} & 0 & $[2^4]$ \\
19 & 18 & \{9\} & 4 & $[2^{\omega\cdot\ 3}]+4$ \\
23 & 11 & \{11\} & 1 & $[2^{\omega^4}]+1$ \\
29 & 28 & \{7,4\} & 0 & $[2^{\omega^3}]+[2^2]$ \\
31 & 5 & \{5\} & 1 & $[2^{\omega^2}]+1$ \\
37 & 36 & \{9,4\} & 0 & $[2^{\omega\cdot\ 3}]+[2^2]$ \\
41 & 20 & \{5\} & 1 & $[2^{\omega^2}]+1$ \\
43 & 14 & \{7\} & 1 & $[2^{\omega^3}]+1$ \\
\hline
\end{tabular}
\end{table}

\begin{table}
\caption[]{$\alpha_u\in \textbf{On}_3$} \label{On3}
\begin{tabular}{|c|c|c|c|c|}
\hline $u$ & $f(u)$ & $Q(f(u))$ & excess & $\alpha_u$ \\
\hline 2 & 1 & $\emptyset$ & 2 & 2 \\
5 & 4 & \{4\} & 1 & $[3^2]+1$ \\
7 & 6 & \{3,2\} & 0 & $[3^\omega]+3$ \\
11 & 5 & \{5\} & 1 & $[3^{\omega^2}]+1$ \\
13 & 3 & \{3\} & 0 & $[3^\omega]$ \\
17 & 16 & \{16\} & 1 & $[3^8]+1$ \\
19 & 18 & \{9,2\} & 0 & $[3^{\omega\cdot\ 3}]+3$ \\
23 & 11 & \{11\} & 1 & $[3^{\omega^4}]+1$ \\
29 & 28 & \{7,4\} & 0 & $[3^{\omega^3}]+[3^2]$ \\
31 & 30 & \{5,3\} & 0 & $[3^{\omega^2}]+[3^\omega]$ \\
37 & 18 & \{9,2\} & 0 & $[3^{\omega\cdot\ 3}]+3$ \\
41 & 8 & \{8\} & 1 & $[3^4]+1$ \\
43 & 42 & \{7\} & 1 & $[3^{\omega^3}]+1$ \\
\hline
\end{tabular}
\end{table}

\begin{table}
\caption[]{$\alpha_u\in \textbf{On}_5$} \label{On5}
\begin{tabular}{|c|c|c|c|c|}
\hline $u$ & $f(u)$ & $Q(f(u))$ & excess & $\alpha_u$ \\
\hline 2 & 1 & $\emptyset$ & 2 & 2 \\
3 & 2 & \{2\} & 1 & $5+1$ \\
7 & 6 & \{3\} & 1 & $[5^\omega]+1$ \\
11 & 5 & \{5\} & 0 & $[5^{\omega^2}]$ \\
13 & 4 & \{4\} & 1 & $[5^2]+1$ \\
17 & 16 & \{16\} & 1 & $[5^8]+1$ \\
19 & 9 & \{9\} & 1 & $[5^{\omega\cdot\ 3}]+1$ \\
23 & 22 & \{11,2\} & 0 & $[5^{\omega^4}]+5$ \\
29 & 14 & \{7\} & 1 & $[5^{\omega^3}]+1$ \\
31 & 3 & \{3\} & 1 & $[5^\omega]+1$ \\
37 & 36 & \{9,4\} & 0 & $[5^{\omega\cdot\ 3}]+[5^2]$ \\
41 & 20 & \{5,4\} & 0 & $[5^{\omega^2}]+[5^2]$ \\
43 & 42 & \{7\} & 1 & $[5^{\omega^3}]+1$ \\
\hline
\end{tabular}
\end{table}

\begin{table}
\caption[]{$\alpha_u\in \textbf{On}_7$} \label{On7}
\begin{tabular}{|c|c|c|c|c|}
\hline $u$ & $f(u)$ & $Q(f(u))$ & excess & $\alpha_u$ \\
\hline 2 & 1 & $\emptyset$ & 3 & 3 \\
3 & 1 & $\emptyset$ & 2 & 2 \\
5 & 4 & \{4\} & 1 & $[7^2]+1$ \\
11 & 10 & \{5\} & 1 & $[7^{\omega^2}]+1$ \\
13 & 12 & \{3,4\} & 0 & $[7^\omega]+[7^2]$ \\
17 & 16 & \{16\} & 1 & $[7^8]+1$ \\
19 & 3 & \{3\} & 1 & $[7^\omega]+1$ \\
23 & 22 & \{11\} & 1 & $[7^{\omega^4}]+1$ \\
29 & 7 & \{7\} & 0 & $[7^{\omega^3}]$ \\
31 & 15 & \{5,3\} & 0 & $[7^{\omega^2}]+[7^\omega]$ \\
37 & 9 & \{9\} & 1 & $[7^{\omega\cdot\ 3}]+1 $ \\
41 & 40 & \{5,8\} & 0 & $[7^{\omega^2}]+[7^4]$ \\
43 & 6 & \{3,2\} & 3 & $[7^\omega]+7+3$ \\
\hline
\end{tabular}
\end{table}

\begin{table}
\caption[]{$\alpha_u\in \textbf{On}_{11}$} \label{On11}
\begin{tabular}{|c|c|c|c|c|}
\hline $u$ & $f(u)$ & $Q(f(u))$ & excess & $\alpha_u$ \\
\hline 2 & 1 & $\emptyset$ & 2 & 2 \\
3 & 2 & \{2\} & 1 & $11+1$ \\
5 & 1 & $\emptyset$ & 2 & 2 \\
7 & 3 & \{3\} & 1 & $[11^\omega]+1$ \\
13 & 12 & \{3,4\} & 0 & $[11^\omega]+[11^2]$ \\
17 & 16 & \{16\} & 1 & $[11^8]+1$ \\
19 & 3 & \{3\} & 1 & $[11^\omega]+1$ \\
23 & 22 & \{11,2\} & 0 & $[11^{\omega^4}]+11$ \\
29 & 28 & \{7,4\} & 0 & $[11^{\omega^3}]+[11^2]$ \\
31 & 30 & \{5,3\} & 0 & $[11^{\omega^2}]+[11^\omega]$ \\
37 & 6 & \{3\} & 1 & $[11^\omega]+1$ \\
41 & 40 & \{5,8\} & 0 & $[11^{\omega^2}]+[11^4]$ \\
43 & 7 & \{7\} & 1 & $[11^{\omega^3}]+1$ \\
\hline
\end{tabular}
\end{table}

\section{Conclusion: thoughts about $\textbf{On}_0$}

To conclude, let's consider how we might turn the ordinals into $\textbf{On}_0$, a Field of characteristic zero. The inductive construction from Section 1 works just as well in the characteristic zero case, so all that needs to be determined is how to define the smallest field $\phi_0$ (which will be isomorphic to $\QQ$, the field of rationals).

We'll fill in the addition table first, then the multiplication table; for each possible sum or product, we'll choose the smallest ordinal that still allows us to construct a Field of characteristic zero. First of all, we let $0+0=0$; that forces $0$ to be the additive identity, so we have $0+\alpha=\alpha$ for all ordinals $\alpha$. Next, we cannot have $1+1=0$ (or else we no longer have characteristic zero) or $1+1=1$ (since $1$ is not the additive identity), but we can (and must) have $1+1=2$. We then have $1+2=3$, $1+3=4$, and so on; $1+\alpha=[\alpha+1]$ for all $\alpha\in\omega$. But that forces the rest of the addition table for all finite ordinals: for $\alpha,\beta\in\omega$, we have $\alpha+\beta=[\alpha+\beta]$. We just have ordinary addition (and hence, ordinary multiplication) within $\omega$.

Now, the next sum to determine is the sum of $\omega$ and $1$. We can have $\omega+1=0$; $\omega$ can play the role of $-1$. Thus, $\omega+2=1$, $\omega+3=2$, etc. The next undetermined sum is then $\omega+\omega$; this cannot equal $\omega$ or anything in $\omega$, so we must have $\omega+\omega=[\omega+1]$. So $[\omega+1]=-2$, and similarly $[\omega+2]=-3$, and so on. We thus obtain our first group: $[\omega\cdot 2]$ is isomorphic to $\ZZ$, the ring of integers.

Since $[\omega\cdot 2]$ is a group, $[\omega\cdot 2]+\alpha$ cannot be an element of $[\omega\cdot 2]$ for any $\alpha\in [\omega\cdot 2]$. So making the simplest choices at every step, we let $[\omega\cdot 2]+1=[\omega\cdot 2+1]$, $[\omega\cdot 2]+2=[\omega\cdot 2+2]$, and so on; we then have $[\omega\cdot 2]+\alpha=[\omega\cdot 2+\alpha]$ for all $\alpha\in[\omega\cdot 2]$. The next sum to consider is
$[\omega\cdot 2]+[\omega\cdot 2]$. This sum cannot be 0, but it can (and thus must) be 1, since $[\omega\cdot 2]$ may play the role of $1/2$. We thus get $[\omega\cdot 4]$ as our next group, consisting of all halves of integers. We similarly get $[\omega\cdot 4]=1/4$, $[\omega\cdot 8]=1/8$, and so on; $[\omega^2]$ is our next ring, the ring of dyadic rationals.

We then have $[\omega^2]+\alpha=[\omega^2+\alpha]$ for all $\alpha\in[\omega^2]$, so the next sum to consider is $[\omega^2]+[\omega^2]$. This cannot be anything in $[\omega^2]$, since all elements of $[\omega^2]$ already have halves; thus, we must have $[\omega^2]+[\omega^2]=[\omega^2\cdot 2]$. However, we can (and must) have $[\omega^2]+[\omega^2\cdot 2]=1$, letting $[\omega^2]$ play the role of $1/3$. Similarly, we have $[\omega^2\cdot 3]=1/9$, $[\omega^2\cdot 9]=1/27$, etc. Our next ring is then $[\omega^3]=1/5$, and we then have $[\omega^4]=1/7$, $[\omega^5]=1/11$, and so on; for any odd prime $p$, if $p$ is the $k$th prime, then $[\omega^k]=1/p$. Our smallest field is thus $[\omega^\omega]$, which is isomorphic to $\QQ$.

With the smallest field $\phi_0$ thus constructed, we then use the same construction as for $\textbf{On}_p$ to construct all of $\textbf{On}_0$. Unfortunately, further analysis of $\textbf{On}_0$ would seem to be very difficult. We were able to obtain a nearly complete analysis of $\textbf{On}_p$ below the first transcendental, mostly because all elements below $[\omega^{\omega^\omega}]$ are contained in finite fields. But there are no finite fields of characteristic zero. I would imagine that finding the first transcendental in $\textbf{On}_0$ would be as difficult as finding the second transcendental in any $\textbf{On}_p$. So we won't analyze $\textbf{On}_0$ any further.

\end{document}